\documentclass[11pt,a4paper]{amsart}
\usepackage{amsmath,amssymb,amsthm,amsfonts,enumerate,mathtools,pinlabel,float,makecell}
\usepackage[myheadings]{fullpage}
\usepackage{tikz-cd,tikz,pgf}
\usepackage{tikz-cd}
\usetikzlibrary{arrows.meta, bending}
\tikzset{C/.style={circle, minimum size=8mm,
                   node contents={},
                   append after command={\pgfextra{%
        \draw[-{Straight Barb[flex']}](\tikzlastnode.150) arc (150:450:4mm);}
                }}
        }
\usepackage{hyperref}
\hypersetup{
    colorlinks=true,  
    linkcolor=blue,
    citecolor=blue,
}
\usepackage{subcaption}
\newtheorem{cor*}{Corollary}
\newtheorem{thm*}{Theorem}
\newtheorem{lem*}{Lemma}
\newtheorem{prop*}{Proposition}
\newtheorem{qstn*}{Question}
\newtheorem{eqn*}{Equation}
\newtheorem{theorem}{Theorem}[section]
\newtheorem{cor}{Corollary}[theorem]

\newtheorem{lemma}[theorem]{Lemma}

\newtheorem{algo}[theorem]{Algorithm}

\theoremstyle{definition}
\newtheorem{defn}[theorem]{Definition}
\newtheorem{exmp}[theorem]{Example}

\newcommand{\Z}{\mathbb{Z}}

\newcommand{\orb}{\mathcal{O}}

\newcommand{\Mod}{\mathrm{Mod}}
\newcommand{\Teich}{\mathrm{Teich}}
\newcommand{\T}{Teichm\"uller }

\everymath{\displaystyle}

\makeatletter
\@namedef{subjclassname@2022}{\textup{2022} Mathematics Subject Classification}
\makeatother

\begin{document}

\title[Coordinates of the branch loci of cyclic actions]{Fenchel-Nielsen coordinates of the branch loci \\of cyclic actions}
\author{Atreyee Bhattacharya}
\address{(A. Bhattacharya) Department of Mathematics\\
Indian Institute of Science Education and Research Bhopal\\
Bhopal Bypass Road, Bhauri \\
Bhopal 462 066, Madhya Pradesh\\
India}
\email{atreyee@iiserb.ac.in}
\urladdr{https://sites.google.com/iiserb.ac.in/homepage-atreyee-bhattacharya/home?authuser=1}

\author{Satyajit Maity}
\address{(S. Maity) Department of Mathematics\\
Indian Institute of Science Education and Research Bhopal\\
Bhopal Bypass Road, Bhauri \\
Bhopal 462 066, Madhya Pradesh\\
India}
\email{msatyajit.194@gmail.com}

\author{Kashyap Rajeevsarathy}
\address{(K. Rajeevsarathy) Department of Mathematics\\
Indian Institute of Science Education and Research Bhopal\\
Bhopal Bypass Road, Bhauri \\
Bhopal 462 066, Madhya Pradesh\\
India}
\email{kashyap@iiserb.ac.in}
\urladdr{https://home.iiserb.ac.in/$_{\widetilde{\phantom{n}}}$kashyap/}

\subjclass[2020]{Primary 57K20, Secondary 57M60}

\keywords{Surface. Mapping class. Finite order maps. Teichm\"uller space.}

\begin{abstract} Let $S_g$ be a closed, connected, and oriented smooth surface of genus $g\geq 2$. Let the mapping class group of $S_g$ be denoted by $\mathrm{Mod}(S_g)$ and the Teichm\"{u}ller space of $S_g$ by $\mathrm{Teich}(S_g)$. It is known that $\mathrm{Mod}(S_g)$ acts by isometries on $\mathrm{Teich}(S_g)$ with respect to the Weil-Petersson metric. In this paper, we develop algorithms to describe the Fenchel-Nielsen coordinates of fixed points of the actions of certain finite cyclic subgroups of $\mathrm{Mod}(S_g)$ on $\mathrm{Teich}(S_g)$. As applications of these algorithms, we compute the Fenchel-Nielsen coordinates of the fixed points of three cyclic subgroups of orders $10$, $8$, and $4$, on $\mathrm{Mod}(S_2)$. 
\end{abstract}

\maketitle

\section{Introduction}
\label{sec:intro}
Let $S_g$ be the closed orientable surface of genus $g \geq 2$, and let $\Mod(S_g)$ be the mapping class group of $S_g$. Kerckhoff’s solution to the Nielsen realization problem \cite{Kerckhoff} established that any finite subgroup of $\Mod(S_g)$ can be realized as a group of isometries for some hyperbolic metric on $S_g$. More recently, in \cite{Realization1}, Parsad et al. developed methods for constructing explicit hyperbolic metrics that realize finite cyclic subgroups of $\Mod(S_g)$. In a subsequent work \cite{Realization2}, the authors gave explicit parametrizations of the fixed point sets of these cyclic actions as totally geodesic K\"ahler submanifolds of $\Teich(S_g)$ with respect to the Weil-Petersson metric.

It is a classical fact that $\Mod(S_g)$ acts properly discontinuously on the \T space by isometries with respect to both the \T and the Weil-Petersson metrics. However, barring surface rotations, explicit descriptions of these isometries in terms of the Fenchel-Nielsen coordinates of \T spaces via suitably chosen pants decompositions of the surface, is not well understood. Another natural question that follows in this context is how one can derive the explicit Fenchel-Nielsen coordinates of the \T classes represented by the hyperbolic structures realizing a given cyclic subgroup $H =\langle F\rangle$ of $\Mod(S_g)$. One possible approach to this end could be first to understand the isometry $F_\#$ induced by $F$ on $\Teich(S_g)$ and then compute its fixed points. This method is particularly effective when $F$ is represented by a surface rotation, as in that case, $F_\#$ is simply a permutation of the Fenchel-Nielsen coordinates of $\Teich(S_g)$. However, this method becomes quite challenging for an arbitrary periodic \( F \) for two main reasons: (a) it is difficult to derive \( F_\# \) in general, as an explicit factorization of \( F \) into Dehn twists may not be known; and (b) even if \( F_\# \) is known, solving the equation \( F_\#(x) = x \) in \( \Teich(S_g) \) is expected to be highly non-trivial. 
 
Let $F \in \Mod(S_g)$ be an irreducible periodic mapping class whose Nielsen representative has at least one fixed point on $S_g$ (also known as a \textit{irreducible Type 1 action}).  It was shown in \cite{Realization2} and \cite{Realization1} that in such a case the induced map $F_\#$ on $\Teich(S_g)$ has a unique fixed point, which is represented by a canonical semi-regular hyperbolic polygon $\mathcal{P}_{F}$ with a side-pairing whose rotation realizes $F$.  In this paper, using the results from \cite{Realization2} and \cite{Realization1}, we provide an algorithm  to compute the Fenchel-Nielsen coordinates of the fixed point of $F_\#$ that bypasses the need to derive a closed-form expression for $F_\#$. Given a Type 1 irreducible action $F$ on $S_g$ as above,  the following algorithm provides a description of the fixed point of its induced action on $\Teich(S_g)$ as summarised below:
\begin{algo}\label{algo:1}
\begin{enumerate}[\textit{Step} 1.]
\item Start with the unique hyperbolic polygon $\mathcal{P}_{F}$ which realizes $F$ as an isometry, as mentioned in Theorem 1 in \cite{Realization1} and Proposition 4.1 in \cite{Realization2}.
\item  Construct a suitable pants decomposition as follows:
\begin{enumerate}[\textit{Step} 2(a).]
\item Choose a homotopically non-trivial simple closed curve, possibly along the boundary of the polygon, and consider its orbit under the action.
\item If the number of homotopically disjoint curves in that orbit is $3g -3$, we consider this collection as our desired pants decomposition.
\item Otherwise, we consider a suitable homotopically non-trivial simple closed curve in the complement of the previous orbit and consider its orbit under the action. 
\item Repeat steps 2(a) - 2(c) until the total number of homotopically disjoint simple closed curves from distinct orbits adds up to $3g-3$.  
\end{enumerate}
\item  Viewing the polygon $\mathcal{P}_{F}$ as a hyperbolic polygon in the Poincar\'e disc $\mathbb{D}$ with proper identifications, compute the length and twist parameters associated with the corresponding pants curves and thereby obtain the Fenchel-Nielsen coordinates of the hyperbolic structure given by $\mathcal{P}_{F}$ using basic hyperbolic trigonometry. 
\end{enumerate}
\end{algo} 
As a concrete demonstration of the algorithm, we obtain the Fenchel-Nielsen coordinates of the Type 1 irreducible actions of orders $10$ and $8$ on $S_2$ (see Section \ref{sec. 3} for details). These are known~\cite{Harvey1966,AW} to be the cyclic actions of largest possible orders on $S_2$. The description of the fixed points of the corresponding Type 1 irreducible actions is as follows.
\begin{cor*} \label{Cor:order 10}
Let $F \in \Mod(S_2)$ be a Type 1 irreducible action and let $(c_1, c_2, c_3, t_1, t_2, t_3)\in \Teich(S_2)$ be the Fenchel-Nielsen coordinates of the  unique fixed point of the induced $\langle F_\# \rangle$-action in $\Teich(S_2)$. 
\begin{enumerate}[(i)]
\item If is realized as a $2\pi/10$ rotation of the canonical regular hyperbolic $10$-gon $\mathcal{P}_F$ with opposite sides identified, then
\begin{eqnarray*}
c_1 &=& c_2=2\operatorname{arcosh}\left(\frac{2+\sqrt{5}}{2}\right),\ \ \ 
c_3 = 2\operatorname{arcsinh}\left(\sqrt{\frac{5+3\sqrt{5}}{2}}\right),\\
t_1&=& t_2= 2\operatorname{arcosh}\left(\frac{1}{4}\sqrt{25+9\sqrt{5}}\right) 
\ \text{ and} \ \ t_3=-\operatorname{arcosh}\left(\frac{3+\sqrt{5}}{2}\right).\\
\end{eqnarray*}
\item If is realized as a $2\pi/8$ rotation of the canonical regular hyperbolic $8$-gon $\mathcal{P}_F$ with opposite sides identified, then
\begin{eqnarray*}
&& c_1=c_2=t_1=t_2=2\operatorname{arcosh}\left(1+\sqrt{2}\right),\\ 
&&c_3=2\operatorname{arsinh}\left(2 \sqrt{4+3\sqrt{2}} \right) \ \text{ and }t_3=-2\operatorname{arcosh}\left(\sqrt{2+\sqrt{2}}\right).
\end{eqnarray*}
\end{enumerate}
\end{cor*}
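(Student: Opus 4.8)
\emph{Step 1.} The statement is exactly what one obtains by running Algorithm~\ref{algo:1} on the two actions in question, so the plan is to carry out its three steps in each case. I will treat the order-$10$ and order-$8$ cases in parallel; write $n\in\{10,8\}$ for the order. First I would fix $\mathcal{P}_F$ as a regular hyperbolic $n$-gon centered at $0\in\mathbb{D}$, with vertices equally spaced on a hyperbolic circle of radius $R$. Since $F$ is realized by the rotation through $2\pi/n$ and opposite sides are identified, the $n$ vertices fall into a single class when $n=8$ and into two classes of five when $n=10$; smoothness of the quotient then forces the interior angle to be $\pi/4$, respectively $2\pi/5$. Applying the right-triangle formulas to the triangle spanned by the center, a vertex, and the midpoint of an adjacent side expresses $\cosh R$, the apothem, and the side length in closed form (e.g.\ $\cosh(\text{apothem})=\cot(\pi/8)=1+\sqrt{2}$ when $n=8$), and simultaneously pins down the Fuchsian group $\Gamma$: each side-pairing is an explicit hyperbolic translation whose axis and translation length I can write down.

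\emph{Steps 2--3.} Next I would run Step~2 of the algorithm. Beginning with a homotopically non-trivial simple closed curve built from a diameter or a short diagonal of $\mathcal{P}_F$ adapted to the side-pairing, I would compute its $\langle F\rangle$-orbit, discard curves until a maximal pairwise-homotopically-disjoint sub-collection remains, and, if it contains fewer than $3g-3=3$ curves, repeat inside the complementary subsurface. The expected output is a pants decomposition $\{\gamma_1,\gamma_2,\gamma_3\}$ in which $\gamma_1,\gamma_2$ lie in one $\langle F\rangle$-orbit and $\gamma_3$ in a second; note that the whole triple cannot be $\langle F\rangle$-invariant, since $F$ is irreducible, which is why two distinct orbits are needed. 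With the geodesic representatives of the $\gamma_i$ and their common perpendiculars (the seams of the two pairs of pants, $2g-2=2$ in number) drawn inside $\mathbb{D}$, Step~3 reduces every length $c_i$ and every twist $t_i$ to repeated applications of the hyperbolic cosine rule, the Pythagorean theorem, and the formulas for right-angled hexagons; equivalently one reads $c_i=2\operatorname{arcosh}(|\operatorname{tr}\gamma_i|/2)$ off the matrices in $\Gamma$. The equalities $c_1=c_2$ and $t_1=t_2$ reflect a symmetry of the chosen decomposition (for $n=8$, for instance, $F^2$ interchanges $\gamma_1,\gamma_2$ and preserves $\gamma_3$), while $c_3$ and $t_3$ come out of a single hexagon/right-triangle computation; the stated radicals are precisely what these identities yield.

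The main obstacle is concentrated in Step~2 and in the twist half of Step~3. Choosing the initial curve so that its $\langle F\rangle$-orbit actually contains a disjoint sub-collection completing to a pants decomposition requires understanding the intersection pattern of an entire orbit inside $\mathcal{P}_F$; this is the combinatorial heart of the argument, and a poor initial choice is the main way the computation can go wrong. Once the decomposition is fixed, the twist parameters are the delicate part: each $t_i$ is a \emph{signed} length defined only relative to a marking, so one must follow the seams through the side-pairing identifications and record the signed displacement they pick up, keeping careful track of orientation conventions --- this is the source of the minus signs in $t_3$. By contrast, the length computations should be routine once the configuration in $\mathbb{D}$ is in hand. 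Finally, the fact that the hyperbolic structure carried by $\mathcal{P}_F$ is the unique fixed point of $\langle F_\#\rangle$, so that its Fenchel--Nielsen coordinates are the ones being computed, is supplied by Theorem~1 of \cite{Realization1} and Proposition~4.1 of \cite{Realization2} and is the input to Step~1.
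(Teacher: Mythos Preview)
Your plan is correct and is essentially the approach the paper takes: it applies Algorithm~\ref{algo:1} to each action, realizes $\mathcal{P}_F$ as the regular $n$-gon in $\mathbb{D}$ with the interior angles you state, builds the pants decomposition from $\langle F\rangle$-orbits of curves through midpoints of the sides (so $\gamma_1,\gamma_2$ lie in one orbit with $\gamma_2=F^2(\gamma_1)$, and $\gamma_3$ is a diameter), and then extracts the lengths and signed twists by explicit hyperbolic trigonometry in the disc, including computing the feet of the common perpendiculars in coordinates. The only refinement the paper adds beyond your outline is the specific bookkeeping device of taking $c_i:=F(\gamma_i)$ as the pants curves and the original $\gamma_i$ as the seams, which streamlines the twist computation.
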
 

\noindent Algorithm~\ref{algo:1} also helps in deriving fixed points of Type 1 irreducible actions that are realized as higher powers of Type 1 irreducible actions on $S_g$ ($g\geq 2$). For instance, consider the Type 1 irreducible action of order $5$ on $S_2$ that can be realized as $F^2$, where $F$ is the Type 1 irreducible action of order $10$ from Corollary \ref{Cor:order 10}. Then the induced actions of $F$ and $F^2$ on $\Teich(S_2)$ share the same fixed point. 

It turns out that the above algorithm can be extended to describe coordinates of fixed points of certain periodic reducible actions, also called \textit{Type 2 actions} on $S_g$ (see Algorithm~\ref{algo:compatible_pair}). In \cite{Realization1}, it was shown that given irreducible Type 1 actions $F$ and $F^{-1}$ on $S_{g}$ realized by isometries of the same hyperbolic structure $\mathcal{P}_F$,  a periodic Type 2 action $G$ on $S_{2g}$ can be realized by an hyperbolic structure obtained by removing two identical invariant disks around the centers of two copies of $\mathcal{P}_F$ and then identifying the resultant boundary components.  For $i=1,2$,  such a pair $F_i$ of actions on $S_{g_i}$ where fixed points at the centers of the polygons $\mathcal{P}_{F_i}$ induce local rotational angles $\theta_i$ satisfying $\theta_1+\theta_2 \equiv 0\pmod{2 \pi}$, are said to form a \textit{compatible pair }$(F_1,F_2)$.  Using this construction, we obtain an algorithm to describe the explicit Fenchel-Nielsen coordinates of fixed point sets of the isometric actions in $\Teich(S_{2g})$ that correspond to the aforementioned Type 2 actions on $S_{2g}$ (see Section \ref{sec.4} for details). 

\begin{algo}\label{algo:intro_compatible_pair} Given a Type 1 irreducible $F \in \operatorname{Mod}\left(S_g\right)$ of order $n$, we consider the compatible pair $G:=(F, F^{-1})$ realized via compatibility along a common fixed point. Then an algorithm to describe the Fenchel-Nielsen coordinates of the fixed points of the induced $\langle G_{\#}\rangle$ action on $\Teich(S_{2 g})$ is outlined below:
\begin{enumerate}[\textit{Step} 1:]
\item Start with the unique hyperbolic polygon $\mathcal{P}_F$ realizing $F$ as an isometry as mentioned in Theorem 1 in \cite{Realization1} and Proposition 4.1 in \cite{Realization2}.
\item If $\mathcal{P}_F$ has $k$ sides, obtain a hyperbolic $2k$-gon $\mathcal{P}_G$ representing a hyperbolic structure on $S_{2g}$ that realizes $G$ as an isometry, as follows:
\begin{enumerate}[\textit{Step} 2(a)]
\item Construct a hyperbolic $(k+1)$-gon $\mathcal{P}^{\prime}$, with $k$ sides of equal length such that the corresponding $(k-1)$ interior angles bounded by those $k$ sides are equal. The remaining side, distinguished as $\ell$, is not necessarily equal to the $k$ sides mentioned earlier. 
\item In order that $\mathcal{P}^{\prime}$ realizes a hyperbolic structure on $S_{g}^1$ with $F$ as an isometry, it follows from basic hyperbolic trigonometry that the remaining two interior angles of $\mathcal{P}^{\prime}$ must be equal to each other (not necessary equal to the $k-1$ interior angles mentioned above). 
\item On these $k$ equal sides apply the side-pairing relations as in the polygon $\mathcal{P}_F$. Note such a polygon $\mathcal{P}^{\prime}$ always exist representing $F$ as an isometric action on the hyperbolic surface $S_{g}^1$.
\item Construct another hyperbolic ($k+1$)-gon $\mathcal{P}^{\prime \prime}$ by reflecting $\mathcal{P}^{\prime}$ along its distinguished side $\ell$. $\mathcal{P}^{\prime \prime}$ then has the same side-pairing relations as of $\mathcal{P}^{\prime}$.
\item Obtain the desired hyperbolic $2k$-gon $\mathcal{P}_G$ by combining $\mathcal{P}^{\prime}$ and $\mathcal{P}^{\prime \prime}$ identified along the common distinguished side $\ell$. Note that by construction, all sides of $\mathcal{P}_G$ has equal length, $(2k-2)$ interior angles are equal and the remaining two interior angles are also equal to each other (but not necessarily to the $(2k-2)$ interior angles).  
\end{enumerate} 
\item Construct a suitable pants decomposition on $\mathcal{P}_G$ in the following way:
\begin{enumerate}[\textit{Step} 3(a)]
\item Pick the common distinguished side of $\mathcal{P}^{\prime}$ and $\mathcal{P}^{\prime \prime}$ as the first pants curve. Call it $\gamma$.
\item  Choose a homotopically non-trivial simple closed curve, possibly along the boundary of the polygon $\mathcal{P}^{\prime}$, disjoint from $\gamma$ up to homotopy, and consider its orbit under the action.
\item If the number of homotopically disjoint curves in that orbit is $3 g-2$, consider their counterparts from $\mathcal{P}^{\prime \prime}$. These $6g-4$ curves together with $\gamma$, form a pants decomposition for $S_{2 g}$.
\item Otherwise, find a suitable nontrivial simple closed curve in $\mathcal{P}^{\prime}$, lying in the complement of the previous orbit and $\gamma$ (such a curve always exists), and consider its orbit under the action.
\item Repeat the steps 3(b) - 3(d) until the total number of homotopically disjoint simple closed curves obtained from distinct orbits in the polygon $\mathcal{P}^{\prime}$ (other than $\gamma$) adds up to $3 g-2$.
\end{enumerate}
\item Viewing $\mathcal{P}_G$ inside the Poincaré disc with proper identifications, compute the length and twist parameters associated with the corresponding chosen pants curves thereby obtaining the Fenchel-Nielsen coordinates of the hyperbolic structure with the help of hyperbolic trigonometry.
\end{enumerate}
\end{algo}

As an application of Algorithm~\ref{algo:intro_compatible_pair}, we have the following corollary.
\begin{cor}\label{Cor:Type2}
Consider the periodic mapping class $F \in \Mod(S_1)$ of order $4$ realized as the $\pi/2$ rotation of a square with its opposite sides identified and its inverse $F^{-1}$. Let $G \in \Mod(S_2)$ be the Type 2 action of order $4$ realized as a compatible pair $(F,F^{-1})$. Then the fixed points of the induced $\langle G_{\#} \rangle$-action in $\Teich(S_2)  \approx \mathbb{R}^6$ is a two dimensional submanifold of $\Teich(S_2)$ whose Fenchel-Nielsen coordinates are given by:
$$\left\{\left(\gamma_1, \gamma_2, \gamma_1, t, 0,-t\right)\right\},$$ where $\gamma_1=2 \operatorname{arcosh}\left(\cosh \frac{s}{2} \sin \alpha\right), \gamma_2=\operatorname{arcosh}\left(\cosh ^4 s-2 \cosh ^3 s+2 \cosh s\right)$,
$t= \frac{\gamma_1}{2}-\operatorname{arcoth}(\sinh (s-x) \tan \alpha), x=\operatorname{artanh}\left(\operatorname{coth} \frac{s}{2} \cos \alpha\right)$, with $s>\operatorname{arcosh}\left(\cot ^2 \frac{\alpha}{2}\right)$, and $0< \alpha<\frac{\pi}{3}$.
\end{cor}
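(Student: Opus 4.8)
The plan is to execute Algorithm~\ref{algo:intro_compatible_pair} for the compatible pair $G=(F,F^{-1})$ and to record every quantity it produces as an explicit function of two real parameters, a length $s$ and an angle $\alpha$. Since $F$ is the $\pi/2$‑rotation of a Euclidean square with opposite sides identified, the polygon $\mathcal{P}_F$ of Step~1 is a square ($k=4$) carrying the side‑pairing word $aba^{-1}b^{-1}$; hence $\mathcal{P}_G$ will be a hyperbolic octagon, obtained by doubling, across a distinguished side $\ell$, a hyperbolic pentagon $\mathcal{P}'$ with four sides of common length $s$, with the three interior angles between consecutive such sides equal (to the parameter $\alpha$), with the two angles at the ends of $\ell$ equal, and with the $aba^{-1}b^{-1}$ pairing imposed on the four equal sides. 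The side $\ell$ becomes the first pants curve $\gamma$, and the reflection $\rho$ of $\mathcal{P}_G$ across $\ell$ interchanges $\mathcal{P}'$ and its mirror copy $\mathcal{P}''$ while reversing orientation.

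I would first carry out Step~2 by placing $\mathcal{P}'$ in $\mathbb{D}$ symmetrically about the perpendicular bisector of $\ell$ and using the hyperbolic laws of sines and cosines, together with the right‑angled‑polygon formulas obtained after dropping perpendiculars from the centre, to express $|\ell|$, the angle $\psi$ at the ends of $\ell$, and all auxiliary lengths in terms of $s$ and $\alpha$. Requiring that the $aba^{-1}b^{-1}$ pairing yield a smooth hyperbolic structure on $S_1^1$ realizing $F$ forces all five vertices of $\mathcal{P}'$ into a single class; doubling sends all eight vertices of $\mathcal{P}_G$ to one point, and smoothness there is a single linear relation among $\alpha$ and $\psi$ which, in the present normalization, is equivalent to $0<\alpha<\pi/3$. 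Convexity and existence of $\mathcal{P}'$ (equivalently, positivity of the relevant right‑angled subpolygons) is a second inequality which I expect to reduce, after simplification, to $s>\operatorname{arcosh}(\cot^{2}\tfrac{\alpha}{2})$. The double $\mathcal{P}_G$ then represents a hyperbolic $S_2$ realizing $G$ as an isometry.

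Next, following Step~3, I take $\gamma=\ell$ (the $\rho$‑fixed curve), a homotopically non‑trivial simple closed curve $\delta$ in $\mathcal{P}'$ disjoint from $\gamma$ — concretely, the geodesic freely homotopic to one of the identified sides of the square, pushed into the interior of the $S_1^1$‑piece — and its mirror $\delta'=\rho(\delta)$; since $3g-2=1$ for $g=1$, this single orbit suffices, and $\{\delta,\gamma,\delta'\}$ is a pants decomposition because cutting $S_2$ along $\gamma$ gives two copies of $S_1^1$, each of which is cut by $\delta$ (resp. $\delta'$) into a single pair of pants. In the induced ordering $(\delta,\gamma,\delta')$ the coordinate tuple must have the form $(\gamma_1,\gamma_2,\gamma_1,t,0,-t)$: the two outer length coordinates agree since $\rho$ carries $\delta$ to $\delta'$, the middle twist vanishes because the $\rho$‑fixed curve admits no twist, and $\rho$ negates the twists along $\delta$ and $\delta'$. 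For Step~4 the two lengths come from hyperbolic trigonometry inside $\mathcal{P}'$: $\gamma$ is the $\rho$‑fixed geodesic $\ell$, so $\gamma_2=|\ell|$, which I expect to simplify to $\operatorname{arcosh}(\cosh^{4}s-2\cosh^{3}s+2\cosh s)$, and dropping the perpendicular from the centre to $\delta$ gives $\gamma_1=2\operatorname{arcosh}(\cosh\tfrac{s}{2}\sin\alpha)$. For the twist $t$ along $\delta$ I would fix the orthogeodesic seams of the two pairs of pants meeting along $\delta$, drop the relevant perpendiculars in $\mathcal{P}'$ to locate their feet (the foot position being $x=\operatorname{artanh}(\operatorname{coth}\tfrac{s}{2}\cos\alpha)$), and measure the signed displacement along $\delta$, obtaining $t=\tfrac{\gamma_1}{2}-\operatorname{arcoth}(\sinh(s-x)\tan\alpha)$. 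Finally, since $(s,\alpha)$ range independently over $\{\,s>\operatorname{arcosh}(\cot^{2}\tfrac{\alpha}{2}),\ 0<\alpha<\pi/3\,\}$ — a region that by the above parametrizes exactly the hyperbolic structures realizing $G$ — and since the resulting map into $\Teich(S_2)\approx\R^{6}$ is an embedding, the fixed‑point set of $\langle G_{\#}\rangle$ is identified with the claimed $2$‑dimensional submanifold.

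The main obstacle is the twist computation in Step~4: it demands a consistent choice of reference seams and sign conventions between the adjacent pairs of pants, and it reduces to a chain of right‑angled‑pentagon and ‑hexagon identities together with the formula for the distance between feet of perpendiculars along a common geodesic, where keeping the signs straight is the delicate point. A secondary technical issue is to prove that the single inequality $s>\operatorname{arcosh}(\cot^{2}\tfrac{\alpha}{2})$ is equivalent to the conjunction of all the geometric constraints actually used — existence and convexity of $\mathcal{P}'$, realizability of $\delta$ and $\delta'$ by simple closed geodesics, and the domain conditions $\cosh\tfrac{s}{2}\sin\alpha>1$, $\operatorname{coth}\tfrac{s}{2}\cos\alpha<1$, and $\sinh(s-x)\tan\alpha>1$ that make the closed‑form expressions well defined.
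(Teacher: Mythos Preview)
Your proposal is correct and follows essentially the same approach as the paper: you execute Algorithm~\ref{algo:intro_compatible_pair} by building the octagon $\mathcal{P}_G$ as the double of a pentagon $\mathcal{P}'$ across the distinguished side $\ell$, take $\{\delta,\ell,\rho(\delta)\}$ as the pants decomposition, use the reflection symmetry $\rho$ to obtain the form $(\gamma_1,\gamma_2,\gamma_1,t,0,-t)$, and then read off the length and twist parameters via right-angled polygon identities. This matches the paper's proof step for step; the paper's curves $ABA,\,ADA,\,AMA$ are exactly your $\delta,\,\gamma,\,\delta'$, and the paper obtains the constraint $s>\operatorname{arcosh}(\cot^{2}\tfrac{\alpha}{2})$ from the relation $\cosh s=\cot\theta\cot\tfrac{\alpha}{2}$ with $0<\theta<\alpha/2$, which is the concrete version of the convexity/existence argument you flag as a secondary issue.
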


Finally, in Section~\ref{sec:conclusion}, when $F$ is an irreducible Type 1 action, we sketch an algorithm that helps describe the corresponding induced isometric $\langle F_\# \rangle$-action on the \T space (see Algorithm~\ref{algo:induced action}). We also provide a supporting example for this algorithm (see Example \ref{exmp:induced action}). As our focus in this article is primarily on the description of fixed points, induced actions on \T space will be studied more rigorously in upcoming works. 
\section{\textbf{Preliminaries}}\label{sec. 2}
In this section we recall certain basic concepts and results about finite group actions on surfaces that are relevant to this article and also introduce the notations to be followed.

\subsection{Cyclic actions of finite order on surfaces}
Let $F \in \Mod(S_g)$ be a periodic mapping class of order $n$. The \textit{Nielsen Realization} theorem \cite{Kerckhoff, Neilsen1932},  $F$ is represented by an order-$n$ isometry $\mathcal{F}$ of some hyperbolic metric on $S_g$.  Such an $\mathcal{F}$ is called a \textit{Nielsen representative} of $F$.  The $\langle\mathcal{F} \rangle$-action on $S_g$ induces a branched covering $S_g \longrightarrow S_g/{C_n}$, where $C_n = \langle \mathcal{F} \rangle$. The quotient orbifold $\orb_F:=S_g/{C_n}$, also known as the \textit{corresponding orbifold} of $F$, has \textit{signature} $\text{sig}(\orb_F)=(g_0; n_1, \dots, n_\ell)$. From Thurston's orbifold theory \cite{ThurstonGT3M}, the $C_n$-action induces the following short exact sequence 

$$1 \rightarrow \pi_1(S_g) \rightarrow \pi_1^{\text{orb}}(\orb_F) \xrightarrow{\rho} C_n \rightarrow 1,$$
\noindent
where $\pi_1^{\text{orb}}(\orb_F)$, called \textit{orbifold fundamental group},  is a \textit{Fuchsian group} \cite{Katok} with presentation
$$\langle \alpha_1, \beta_1, \dots, \alpha_{g_0}, \beta_{g_0}, \xi_1, \dots, \xi_\ell \, |\, \xi_1^{n_1}=\dots=\xi_\ell^{n_\ell}=1, \prod_{i=1}^\ell \xi_i=\prod_{j=1}^{g_0} [\alpha_i, \beta_i] \rangle.$$ 

The epimorphism $\rho: \pi_1^{\text{orb}}(\orb_F) \longrightarrow C_n $ also known as the \textit{surface-kernel map} \cite{Harvey1966}, is of the form $\rho(\xi_i)=t^{(n/n_i)c_i}$, where $C_n=\langle t \rangle$ and $c_i\in \Z_{n_i}^{\times}$ for $i=1, \dots, \ell$. From the geometric interpretation, it is known that the quotient orbifold $(\orb_F)$ has $\ell$ distinguished  \textit{cone points} of orders $n_1, \dots, n_\ell$ respectively. Each $x_i$ lifts under the branched covering map $S_g \longrightarrow S_g/{C_n}$ to an orbit of size $n/{n_i}$ on $S_g$, and the $\langle\mathcal{F}\rangle$-action induces a local rotation by an angle of $2\pi c_i^{-1}/{n_i}$ in a neighborhood around each points of this orbit. Therefore, we can associate a $C_n$ action on $S_g$ with the following data.
   
\begin{defn}
\label{defn:data-set}
A \textit{cyclic data set of degree $n$} is a tuple 

\begin{center}
    $D= (n, g_0, r; (c_1, n_1), (c_2, n_2), \dots, (c_l, n_l))$,
\end{center}

    where $n \geq 2, g_0 \geq 0, 0 \leq r \leq n-1$, and $l \geq 0$ are integers such that:
\begin{enumerate}

\item[(i)] $r > 0$ if and only if $l=0$, and when $r > 0$, we have gcd$(r, n) = 1$,
\item[(ii)] $2 \leq n_i \leq n$, and $n_i | n$ , for all $i$,
\item[(iii)] $1 \leq c_i \leq n_i - 1$, and gcd$(c_i, n_i)=1$, for all $i$,
\item[(iv)] for all $i$, lcm$(n_1, \dots, \widehat{n_i}, \dots, n_l) =$ lcm$(n_1, \dots, n_l)$, and if $g_0 = 0$, then lcm$(n_1, \dots, n_l) = n$, and 
\item[(v)] $\sum_{i=1}^{l} \frac{n}{n_i}c_i \equiv 0$ (mod $n$).
\end{enumerate}
\noindent
The number $g_0$ is the genus of the corresponding quotient orbifold and the number $g$ determined by the following Riemann-Hurwitz equation 

\begin{center}
    $\frac{2-2g}{n} = 2-2g_0 + \sum_{i=1}^{l} (\frac{1}{n_i} - 1)$
\end{center}

is called the \textit{genus} of the data set, denoted by $g(D)$.
\end{defn}

The quantity $r\geq 0$ in the data set $D$, denoted by $r(D)$, is positive if and only if $D$ represents a free rotation of $S_{g(D)}$ by an angle of $2\pi r(D)/n$, and if $r=0$, we omit writing it in the data set $D$. Also, if a pair $(c_i, n_i)$ occurs more than once, we use the notation $(c_i, n_i)^{[m_i]}$ to denote that the pair $(c_i, n_i)$ occurs with multiplicity $m_i$ in the data set $D$. 

The following theorem of \textit{Nielsen} \cite{Nielsen1937} together with the \textit{Nielsen Realization} theorem \cite{Kerckhoff, Neilsen1932} assert that data sets represent the periodic elements of $\Mod(S_g)$,  up to conjugacy.

\begin{theorem}
\label{thm:Nielsen}
For $g \geq 1$, there exists a bijective correspondence between the conjugacy classes of $C_n$-actions on $S_g$ and the cyclic data sets of degree $n$ and genus $g$.
\end{theorem}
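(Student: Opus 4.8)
The plan is to route the correspondence through the surface-kernel epimorphism attached to the branched covering $S_g \to \orb_F$, prove each direction separately, and then reconcile the two natural notions of equivalence. The central object is the short exact sequence
$$1 \to \pi_1(S_g) \to \pi_1^{\text{orb}}(\orb_F) \xrightarrow{\rho} C_n \to 1$$
together with the Fuchsian presentation of $\pi_1^{\text{orb}}(\orb_F)$ recorded above: a $C_n$-action is thereby encoded by the signature $(g_0; n_1,\dots,n_\ell)$ of $\orb_F$ and the surjection $\rho$, while a cyclic data set is nothing but a normalized bookkeeping of this pair.

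For the forward direction, starting from a $C_n$-action I would pass (via Kerckhoff's Nielsen Realization, so that the quotient carries a clean orbifold structure) to the quotient orbifold $\orb_F$, read off $g_0$ and the cone orders $n_1,\dots,n_\ell$, and write $\rho(\xi_i) = t^{(n/n_i)c_i}$ for a fixed generator $t$ of $C_n$. The task is to verify (i)--(v). The free case $\ell = 0$ gives a genuine closed quotient $S_{g_0}$ and a rotation recorded by $r$ with $\gcd(r,n)=1$, which is (i). Applying $\rho$ to the long relation $\prod_i \xi_i = \prod_j[\alpha_j,\beta_j]$ kills the commutators in the abelian group $C_n$ and yields $\sum_i (n/n_i)c_i \equiv 0 \pmod n$, which is (v). Conditions (ii) and (iii) express that the stabilizer over the $i$-th cone point is cyclic of order exactly $n_i$ acting by a primitive rotation, forcing $n_i \mid n$ and $\gcd(c_i,n_i)=1$. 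The second clause of (iv) is surjectivity of $\rho$: when $g_0 = 0$ the image is generated by the $\rho(\xi_i)$, of orders $n_i$, so it is all of $C_n$ precisely when $\operatorname{lcm}(n_1,\dots,n_\ell)=n$; the first clause is the classical Harvey constraint, which I would derive as a number-theoretic consequence of the solvability of the congruence (v) together with surjectivity. Finally $g(D)=g$ is forced by the Riemann--Hurwitz identity.

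For the reverse direction, given a data set satisfying (i)--(v) I would invoke Fuchsian (or, when $g(D)=1$, Euclidean) group theory to realize a cocompact group $\Gamma$ with the prescribed signature; realizability is governed by the sign of the orbifold Euler characteristic $(2-2g)/n$, which is negative exactly when $g \geq 2$. I would then set $\rho(\xi_i)=t^{(n/n_i)c_i}$ and choose images of the handle generators $\alpha_j,\beta_j$ making $\rho$ onto --- a free choice when $g_0 > 0$, and automatic from (iv) when $g_0=0$. That $\rho$ is a homomorphism reduces to the relations $\rho(\xi_i)^{n_i}=t^{nc_i}=1$ and the long relation, which holds by (v). The crucial point is that $\ker\rho$ is torsion-free: every torsion element of $\Gamma$ is conjugate into some $\langle \xi_i\rangle$, and $\gcd(c_i,n_i)=1$ makes $\rho$ injective on each such cyclic subgroup, so the kernel meets it trivially. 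Hence $\ker\rho$ is a closed surface group, its genus is $g(D)$ by Riemann--Hurwitz, and the deck action of $C_n=\Gamma/\ker\rho$ on the associated (hyperbolic or flat) surface is the sought $C_n$-action, realized by isometries by construction.

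Finally I would prove the two assignments are mutually inverse on conjugacy classes. Conjugating an action by an orientation-preserving homeomorphism descends to an isomorphism of quotient orbifolds intertwining the two branched coverings, so it preserves the signature and alters $\rho$ only by an automorphism of $\Gamma$ on the source and an automorphism of $C_n$ on the target; conversely the arbitrary choices in the reverse construction differ exactly by such automorphisms. The main obstacle is precisely this comparison: one must show that the $\operatorname{Aut}(\Gamma)\times\operatorname{Aut}(C_n)$-orbit of $\rho$ is a complete conjugacy invariant, and that the normalization built into a cyclic data set (the ranges and coprimality of the $c_i$, read up to the permutation of cone points and the simultaneous change of generator $t$) selects exactly one representative per orbit. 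This rigidity --- that two actions with identical data are conjugate --- is the genuinely non-formal content, supplied by Nielsen's classification, while the realizability half is supplied by Kerckhoff. Putting the necessity of (i)--(v) from the forward direction, their sufficiency from the reverse direction, and this rigidity together yields the asserted bijection.
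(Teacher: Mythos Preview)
The paper does not supply its own proof of this theorem: it is stated as a classical result, attributed in the surrounding text to Nielsen's 1937 paper together with the Nielsen Realization theorem of Kerckhoff, and then used as a black box. Your outline is essentially the standard argument one finds in the literature on this circle of ideas (see in particular Harvey~\cite{Harvey1966}), routing the correspondence through surface-kernel epimorphisms onto $C_n$ and verifying that axioms (i)--(v) of Definition~\ref{defn:data-set} encode precisely the existence and conjugacy-invariance conditions for such epimorphisms. So there is nothing to compare against, and your sketch is sound as far as it goes.

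One point is worth flagging. You write that ``the normalization built into a cyclic data set \dots\ selects exactly one representative per orbit,'' but as Definition~\ref{defn:data-set} is stated in the paper, a data set is an ordered tuple with no explicit identification under permutation of the pairs $(c_i,n_i)$ or under simultaneous replacement of all $c_i$ by $uc_i$ for a fixed unit $u \in \Z_n^{\times}$ (change of generator of $C_n$). A literal bijection therefore presupposes either an implicit equivalence relation on data sets or a convention pinning down a canonical representative in each $\Aut(\Gamma)\times\Aut(C_n)$-orbit. You correctly identify this rigidity step as the genuinely non-formal content supplied by Nielsen, but your sketch does not spell out which convention resolves it; in a full write-up that would need to be made explicit.
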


\noindent In view of Theorem~\ref{thm:Nielsen}, we will denote the data set representing the conjugacy class of a cyclic action $F$ by $D_F$. 

\subsection{Classification of Periodic Actions}
Based on the above theorem, the periodic mapping classes are classified into the following three categories. 

\begin{defn}
For $g \geq 1$, let $F \in \Mod((S_g)$ be a periodic mapping class. Then $F$ is said to be a 
\begin{enumerate}
\item[(i)] \textit{Rotational action}, if the action is a rotation of the surface obtained as the restriction of a rotation of $\mathbb{R}^3$ under some fixed embedding $S_g \hookrightarrow \mathbb{R}^3$, or equivalently either $D_F=(n, g_0, r; -)$ with $r \neq 0$, $r \in \mathbb{Z}_n^{\times}$ or 
  \begin{center}
      $D_F= (n, g_0; \underbrace{(c, n), (n-c, n), ..., (c, n), (n-c, n)}_{k-\text{pairs}})$
  \end{center}
  for some integers $k \geq 1$ and $c \in \mathbb{Z}_n ^{\times}$, and $k=1$ if and only if $n \geq 2$. 

\item[(ii)] \textit{Type 1 action} if $D_F=(n, g_0; (c_1, n_1), (c_2, n_2), (c_3, n))$, where $g_0 \geq 0$, $n_i \mid n$, and $c_i \in \mathbb{Z}_{n_i}^{\times}$. 

\item[(iii)] \textit{Type 2 action} if $D_F$ is neither a rotational nor a Type 1 action.
\end{enumerate}
\end{defn}

Let us now define the \textit{reducible} and \textit{irreducible} actions and their characterization given by the following theorem of \textit{Gilman} \cite{Gilman}.

\begin{defn}
For $g \geq 1$,  a periodic mapping class in $\Mod(S_g)$ is called \textit{reducible} if it preserves a multicurve (called a \textit{reduction system}) in $S_g$; Otherwise, it is called \textit{irreducible}.
\end{defn}

\begin{theorem}
A periodic mapping class $F \in \Mod(S_g)$ is \textit{irreducible} if and only if the corresponding quotient orbifold $\orb_F$ is a sphere with three cone points.   
\end{theorem}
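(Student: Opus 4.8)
The plan is to pass to the branched covering $\pi\colon S_g\to\orb_F=S_g/\langle\mathcal F\rangle$ coming from a Nielsen representative $\mathcal F$ of $F$, and to translate the statement ``$F$ preserves a reduction system'' into one about closed curves on the quotient orbifold. I will use throughout that $\mathcal F$ may be taken to be an isometry of a \emph{smooth} constant-curvature metric on $S_g$ obtained by pulling back along $\pi$ a constant-curvature orbifold metric on $\orb_F$ (hyperbolic if $g\ge 2$, Euclidean if $g=1$, which exists since $\orb_F$ is a good orbifold with $\chi^{\mathrm{orb}}(\orb_F)=(2-2g)/n\le 0$); with respect to such metrics $\pi$ is a local isometry away from the branch locus, every closed geodesic avoids the cone points (each has angle $2\pi/m<2\pi$), and an essential simple closed curve on $S_g$ or on $\orb_F$ is represented by a closed geodesic. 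The one piece of elementary geometry that does the work is: a sphere with three cone points contains \emph{no} essential simple closed curve, since such a curve separates $S^2$ into two disks, one of which contains at most one cone point, so the curve either bounds a disk or is peripheral to a cone point; in particular such an orbifold carries no closed geodesic.

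For ``$\orb_F$ a three-cone-pointed sphere $\Rightarrow$ $F$ irreducible'' I argue by contradiction. If $F$ preserved the isotopy class of an essential multicurve, $\mathcal F$ would preserve its unique geodesic representative $\mathcal M$, permuting its components. I would first show that no component $c$ of $\mathcal M$ meets the branch locus: if $c$ passed through a point fixed by a nontrivial power $\mathcal F^k$, then (components of a multicurve being disjoint) $\mathcal F^k$ fixes $c$ setwise, and an orientation-preserving isometry carrying a closed geodesic to itself and fixing a point of it reverses it; hence $\mathcal F^k$ acts on the circle $c$ as a reflection with two distinct fixed points $p\ne p'$, and these lie in different $\langle\mathcal F\rangle$-orbits (two points of $c$ in a common orbit must coincide, because the image of $\langle\mathcal F\rangle$ in the isometry group of $c$ is cyclic and contains the reflection, hence equals $\mathbb Z/2$). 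Thus $\pi(c)$ is a geodesic arc joining two distinct cone points $x_i,x_j$; the boundary of a neighbourhood of its orbit is sent to the boundary $\bar\gamma$ of a disk $D^2(n_i,n_j)$, which bounds $D^2(n_k)$ on the other side; but a boundary curve $a$ of the annular neighbourhood of $c$ in $S_g$ is isotopic to $c$, hence essential, while $\pi(a)=\bar\gamma$, and since $\pi^{-1}$ of a disk with at most one cone point is a disjoint union of disks, $a$ would bound a disk in $S_g$ --- a contradiction. Therefore $\mathcal M$ misses the branch locus, so $\pi(\mathcal M)$ is a nonempty union of closed geodesics, i.e.\ of essential simple closed curves, on $\orb_F$, contradicting the geometric fact above.

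For the converse I prove the contrapositive. The Riemann--Hurwitz relation $\chi^{\mathrm{orb}}(\orb_F)=2-2g_0-\sum_i(1-1/n_i)=(2-2g)/n\le 0$ forces $\ell\ge 3$ when $g_0=0$, so if $\orb_F$ is not a three-cone-pointed sphere then either $g_0\ge 1$ or ($g_0=0$ and $\ell\ge 4$). In the first case take $\bar c\subset\orb_F$ non-separating and disjoint from the cone points; in the second take $\bar c$ separating the $\ell$ cone points into groups of sizes $2$ and $\ell-2$. In either case $\bar c$ is essential --- it bounds neither a disk nor a disk with a single cone point --- hence is represented by a closed geodesic, which automatically misses the cone points, so $\pi^{-1}(\bar c)$ is an $\langle\mathcal F\rangle$-invariant nonempty union of closed geodesics on the smooth surface $S_g$, i.e.\ an essential multicurve preserved by $F$. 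Hence $F$ is reducible.

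The step I expect to be the real obstacle is the branch-locus analysis in the first implication: a priori a component of the invariant geodesic multicurve could run through a fixed point of a power of $\mathcal F$, and one must check this still yields a contradiction rather than an exception. Passing to a parallel copy of such a component and using that the $\pi$-preimage of an orbifold disk with at most one cone point is a disjoint union of disks is what closes this gap; everything else reduces to the Riemann--Hurwitz count together with the elementary fact that $S^2$ with three cone points has no essential simple closed curve.
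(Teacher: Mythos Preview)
The paper does not supply a proof of this statement; it is quoted from Gilman with a citation and no argument, so there is no in-paper proof to compare against. Your proof is the standard covering-space argument and is essentially correct, but two imprecisions deserve comment.

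First, the claim that a three-cone-pointed sphere ``carries no closed geodesic'' is literally false: hyperbolic triangle orbifolds have infinitely many closed geodesics, just no \emph{simple} ones. What you actually prove and use is that there is no essential simple closed curve, hence no simple closed geodesic; you should phrase it that way. Second, and relatedly, when you conclude that $\pi(\mathcal M)$ is ``a nonempty union of closed geodesics, i.e.\ of essential simple closed curves,'' the simplicity of the image is not automatic---the projection of a simple curve under a branched cover need not be simple. The fix is immediate: once $\mathcal M$ avoids the branch locus, $\langle\mathcal F\rangle$ acts freely on $\mathcal M$, so $\pi(\mathcal M)=\mathcal M/\langle\mathcal F\rangle$ is a $1$-manifold and the induced map to $\orb_F$ is injective, hence the image is embedded.

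As an aside, your branch-locus analysis, while valid, handles a vacuous case. Your argument shows that any nontrivial element of $\langle\mathcal F\rangle$ fixing a point $p\in c$ must act on $c$ as an involution, and since $\mathrm{Stab}(p)$ is cyclic this forces $|\mathrm{Stab}(p)|=2$; the same holds at $p'$. Thus two distinct cone points would have order $2$, giving $\orb_F=S^2(2,2,m)$ with $\chi^{\mathrm{orb}}=1/m>0$, contradicting $\chi^{\mathrm{orb}}(\orb_F)=(2-2g)/n\le 0$. So in fact $\mathcal M$ must miss the branch locus outright, and the argument can be shortened accordingly.
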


\subsection{Geometric realizations of Type $1$ Actions} The following theorem provides a geometric realization of irreducible Type 1 actions.
  \begin{theorem} [{\cite[Proposition 4.1]{Realization2}},{\cite[Theorem 2.7]{Realization1}}]\label{thm:geom_real}
For $g \geq 2$, let $F$ be an irreducible Type $1$ $C_n$-action on $S_g$ whose conjugacy class is represented by $D_F=(n, 0; (c_1, n_1), (c_2, n_2), (c_3, n))$. Then $F$ can be realized as a rotation by $2\pi c_3^{-1}/n$ around the center of a hyperbolic polygon $\mathcal{P}_F$ with $k(F)$ many sides of equal length, $\theta(F)$ angles at the vertices, and $W(\mathcal{P}_F)$ side-pairing relations where 
   \[ 
   k(F)=
   \begin{cases}
     2n, & \text{if } n_1, n_2 \neq 2,  \\
     n, & \text{otherwise},
   \end{cases}
   \]
   \[ 
   \theta(F)=
   \begin{cases}
     2\pi/n_1 \text{ and } 2\pi/n_2, & \text{if } n_1, n_2 \neq 2,  \\
     2\pi/n_2, & \text{if } n_1=2,
   \end{cases}
   \]
   and for $0 \leq m \leq n-1$,\\
   \[ 
   W(\mathcal{P}_F)=
   \begin{cases}
     \prod_{i=1}^{n} a_{2i-1}a_{2i} \text{    with } a_{2m+1}^{-1} \sim a_{2z}, & \text{if } k(F)=2n,  \\
     \prod_{i=1}^{n} a_i \text{    with } a_{m+1}^{-1} \sim a_{z}, & \text{if } k(F)=n, 
   \end{cases}
   \]
   where $z \equiv m+ qj$ (mod $n$), $q= n c_3^{-1}/n_2$, and $j= n_2 - c_2$.  Moreover, the polygon $\mathcal{P}_{F}$ describes the unique hyperbolic metric on $S_g$ realizing $F$ as an isometry.
  \end{theorem}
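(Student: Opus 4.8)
The plan is to construct $\mathcal{P}_F$ as an assembly of $n$ fundamental domains of the Fuchsian triangle group uniformizing the quotient orbifold, and then to read off every piece of combinatorial data from the surface-kernel map $\rho$. First I would note that, since $F$ is irreducible of Type~1 with $g_0=0$, the orbifold $\orb_F$ is a sphere with exactly three cone points, of orders $n_1$, $n_2$, and $n_3=n$ (the last equal to $n$ because $c_3\in\Z_n^{\times}$). Hence $\pi_1^{\mathrm{orb}}(\orb_F)$ is the triangle group $\Delta(n_1,n_2,n)$; the hypothesis $g\geq 2$ together with the Riemann--Hurwitz relation forces $\frac{1}{n_1}+\frac{1}{n_2}+\frac{1}{n}<1$, so this group is Fuchsian and $\orb_F$ carries a hyperbolic structure that is moreover unique because $\Teich(\orb_F)$ is a single point. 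A fundamental domain for $\Delta(n_1,n_2,n)$ may be taken to be a hyperbolic quadrilateral $Q$ obtained by doubling a triangle with angles $\frac{\pi}{n_1},\frac{\pi}{n_2},\frac{\pi}{n}$ across the edge opposite the order-$n$ vertex, with the two order-$n_1$ vertices of $Q$ carrying angle $\frac{\pi}{n_1}$ and the order-$n$ and order-$n_2$ vertices carrying angles $\frac{2\pi}{n}$ and $\frac{2\pi}{n_2}$, paired by rotations about the order-$n$ and order-$n_2$ vertices realizing $\xi_3$ and $\xi_2$.

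Next I would assemble the $C_n$-cover $S_g\to\orb_F$ geometrically. Because $\gcd(c_3,n)=1$, the order-$n$ cone point has a single preimage $\tilde p\in S_g$, fixed by all of $\langle\mathcal{F}\rangle$; placing the $n$ copies $Q_{t^{mc_3}}$, $m\in\Z_n$, of $Q$ around $\tilde p$ in the cyclic order dictated by $\rho(\xi_3)=t^{c_3}$ tiles a neighbourhood of $\tilde p$, and the union of these copies is a hyperbolic polygon $\mathcal{P}_F$. By construction the two ``spoke'' edges of each $Q_{t^{mc_3}}$ are identified with spokes of neighbouring copies, so the $2n$ remaining edges $a_1,\dots,a_{2n}$ (two per copy) are the sides of $\mathcal{P}_F$; they all have equal length, and the surviving vertices are lifts of the order-$n_1$ and order-$n_2$ cone points, carrying angles $\frac{2\pi}{n_1}$ and $\frac{2\pi}{n_2}$ respectively. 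The single ``click'' rotation about $\tilde p$ sends $Q_{t^{mc_3}}\mapsto Q_{t^{(m+1)c_3}}$, i.e.\ acts as $t^{c_3}$ on the deck group, so the generator $t$ itself is rotation by $2\pi c_3^{-1}/n$, as claimed, and the boundary word is just the cyclic listing $\prod_{i=1}^n a_{2i-1}a_{2i}$. When $n_1=2$ (or $n_2=2$) the construction degenerates: the order-$2$ cone point lifts to midpoints of edges, so at each such vertex the angle $\frac{2\pi}{2}=\pi$ forces the two incident sides to be collinear and coalesce into one, leaving $k(F)=n$ sides and only the angle $\frac{2\pi}{n_2}$.

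The core of the argument is then the side-pairing $W(\mathcal{P}_F)$. Each side is a lift of one of the two orbifold edges joining the order-$n$ cone point to the order-$n_2$ (resp.\ order-$n_1$) cone point, and in $\orb_F$ these two edges are glued by $\rho(\xi_2)=t^{(n/n_2)c_2}$. Tracking a side of type $a_{2m+1}$ across the order-$n_2$ vertex therefore amounts to applying this local monodromy, which, translated through the cyclic labelling introduced above and composed with the rotation realizing $t^{c_3^{-1}}$, moves the index by $q=nc_3^{-1}/n_2$ steps per application; carrying this out while keeping the orientation reversal (the ``$a^{-1}$'') and the choice of $y$ versus $y^{-1}$ consistent yields the stated identification $a_{2m+1}^{-1}\sim a_{2z}$ with $z\equiv m+qj\pmod n$, $j=n_2-c_2$, and the analogous bookkeeping in the degenerate case gives $a_{m+1}^{-1}\sim a_{z}$. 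I expect this index computation --- correctly composing the global rotational symmetry with the two local monodromies while reconciling the orientation and labelling conventions --- to be the main obstacle, with the $n_i=2$ degeneration a secondary technical point.

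Finally, for existence and uniqueness I would argue as follows. A hyperbolic polygon with the prescribed equal side lengths and vertex angles exists either directly from the Fuchsian construction above or, independently, by verifying the hypotheses of the Poincar\'e polygon theorem: the angles around each vertex cycle sum to $2\pi$, and an Euler-characteristic count of the quotient (or Riemann--Hurwitz applied to $D_F$) shows the resulting closed surface has genus $g$. Uniqueness of $\mathcal{P}_F$, and of the hyperbolic metric on $S_g$ realizing $F$, follows from the rigidity of $\orb_F$: any such metric descends to a $C_n$-invariant hyperbolic structure on $\orb_F$, and since $\Teich(\orb_F)$ is a point this structure --- hence its $C_n$-cover and the associated fundamental polygon --- is uniquely determined.
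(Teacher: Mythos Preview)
The paper does not supply its own proof of this theorem: it is quoted verbatim from \cite[Theorem~2.7]{Realization1} and \cite[Proposition~4.1]{Realization2} and used as a black box throughout (the very next line is Example~\ref{ex:order10}). So there is nothing in the present paper to compare your proposal against.

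That said, your outline is the standard and correct route to this result, and it is almost certainly what the cited references do as well. The key structural observations --- that $\orb_F$ is a rigid $(n_1,n_2,n)$-triangle orbifold, that a fundamental polygon for the cover is obtained by fanning $n$ copies of a doubled fundamental triangle around the unique preimage of the order-$n$ cone point, that the single-step rotation represents $t^{c_3}$ so that $t$ itself rotates by $2\pi c_3^{-1}/n$, and that uniqueness descends from $\dim\Teich(\orb_F)=0$ --- are all right and assembled in the right order. The collapse of pairs of sides when $n_i=2$ (because the angle $\pi$ straightens two edges into one) is also correctly identified.

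The one place where your sketch is genuinely thin is exactly where you flag it: the derivation of the side-pairing index $z\equiv m+qj\pmod n$ with $q=nc_3^{-1}/n_2$ and $j=n_2-c_2$. What you have written is a plausible narrative (``apply the local monodromy $\rho(\xi_2)$ and rewrite in the rotational coordinate determined by $c_3^{-1}$''), but to turn it into a proof you must fix once and for all a labelling convention for the $2n$ (or $n$) edges, specify which of $\xi_2$ or $\xi_2^{-1}$ glues a given outgoing edge to an incoming one at the order-$n_2$ vertex cycle, and then carry the index arithmetic through carefully, keeping track of the change of generator $t\leftrightarrow t^{c_3^{-1}}$. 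None of this is deep, but it is exactly the place where sign and off-by-one errors creep in, and your text does not yet pin it down. If you intend to include a proof, that paragraph needs to be expanded into an explicit computation; otherwise, citing \cite{Realization1,Realization2} as the paper does is entirely appropriate.
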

 
\begin{exmp}\label{ex:order10}
The unique realization of the irreducible Type $1$ action $F$ on $S_2$ (described in Corollary \ref{Cor:order 10} of Section \ref{sec:intro}) with $D_F=(10, 0; (1, 2), (2, 5), (1, 10))$ is shown in Figure \ref{Figure1} below. Here $F$ is realized as a $2 \pi / 10$ rotation about the center of a regular hyperbolic $10$-gon with interior angles equal to $2 \pi / 5$ and opposite sides identified. In this polygon, the center (which is the fixed point of the action) corresponds to the cone point $(1,10)$, the vertices $\{A, B\}$ correspond to the cone point $( 2,5)$ and the midpoints of the sides correspond to the cone point $(1,2)$.

\begin{figure}[h]
  \centering
    \includegraphics[width=40ex]{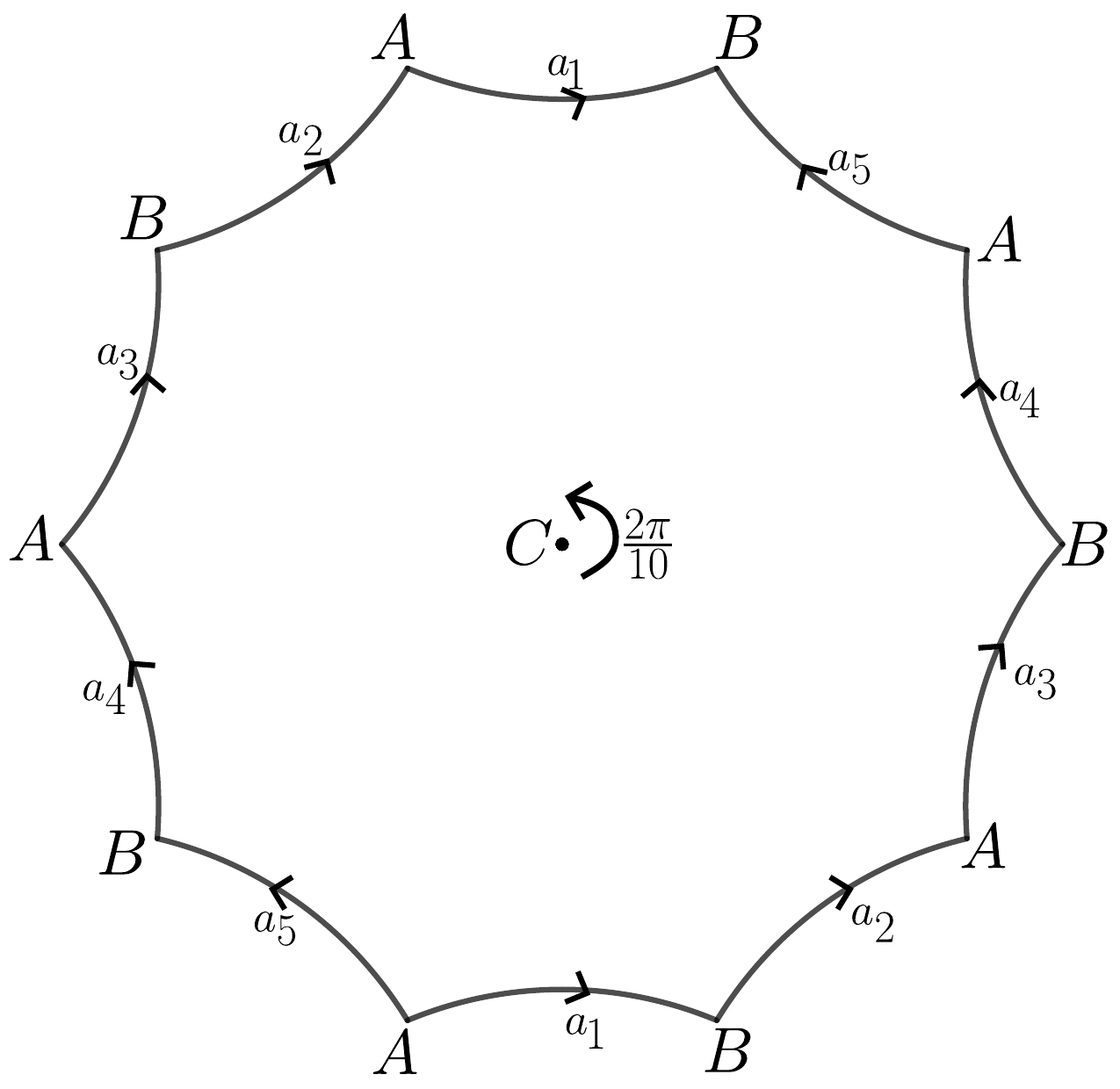}
    \caption{A realization of the irreducible Type 1 action $F$ on $S_2$.}
    \label{Figure1}
\end{figure}
\end{exmp}
\subsection{Compatibility of two cyclic actions} In this subsection, we describe a method of constructing a new reducible cyclic action by pasting a pair of irreducible Type 1 actions along compatible orbits where the induced rotations are equal. We refer the reader to~\cite{Realization1} for more details.
\begin{defn}
\label{defn:rs-comp}
For $i=1,2$, let $F_i  \in \Mod(S_{g_i})$ be periodic mapping classes with
$$D_{F_i}=(n, g_{0, i}; (c_{i, 1}, n_{i, 1}), (c_{i, 2}, n_{i, 2}), ..., (c_{i, l_i}, n_{i, l_i})).$$ 
Then $F_1$ and $F_2$ are said to form an \textit{$(r, s)$-compatible pair $(F_1, F_2)$} if there exists $1 \leq r \leq l_1$ and $1 \leq s \leq l_2$ such that 
\begin{enumerate}
      \item[(i)] $n_{1, r}=n_{2, s}=k$ and
      \item[(ii)] $c_{1, r} + c_{2, s} \equiv 0$ (mod $k$).
      \end{enumerate}
\end{defn}      
\noindent The following lemma is a direct consequence of Definition~\ref{defn:data-set}.
\begin{lemma}
An $(r, s)$-compatible pair $(F_1, F_2)$ of periodic mapping classes $F_i \in  \Mod(S_{g_i})$ as in Definition~\ref{defn:rs-comp} defines a reducible periodic mapping class $F \in \Mod(S_g)$,  where:
\begin{equation*}
\label{}
\begin{split}
   D_F :=&(n, g_{0, 1} + g_{0, 2}; (c_{1, 1}, n_{1, 1}), \dots, \widehat{(c_{1, r}, n_{1, r})}, \dots, (c_{1, l_1}, n_{1, l_1}),\\  
&(c_{2, 1}, n_{2, 1}), \dots, \widehat{(c_{2, s}, n_{2, s})}, \dots, (c_{2, l_2}, n_{2, l_2}))
\end{split}
\end{equation*}
and  $g = g(D_{F_1}) + g(D_{F_2}) + n/k -1$.
\end{lemma}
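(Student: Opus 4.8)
The plan is to check directly that the tuple $D_F$ displayed in the statement satisfies the five axioms of a cyclic data set (Definition~\ref{defn:data-set}), invoke the Nielsen correspondence (Theorem~\ref{thm:Nielsen}) to obtain the associated conjugacy class of periodic maps on $S_{g(D_F)}$, and then read off the genus and establish reducibility. The geometric picture behind the bookkeeping is the equivariant cut-and-paste from \cite{Realization1}: realize $F_i$ as an isometry $\mathcal{F}_i$ of a hyperbolic $S_{g(D_{F_i})}$ via Theorem~\ref{thm:geom_real}, delete an $\mathcal{F}_i$-invariant tubular neighborhood of the size-$(n/k)$ orbit lying over the cone point of order $k=n_{1,r}=n_{2,s}$, and glue the two resulting bordered surfaces along their $n/k$ new boundary circles; condition (ii) of Definition~\ref{defn:rs-comp}, $c_{1,r}+c_{2,s}\equiv 0\pmod{k}$, is precisely what makes the local rotation numbers of $\mathcal{F}_1$ and $\mathcal{F}_2$ along paired boundary circles cancel, so the gluing can be performed $\langle\mathcal{F}\rangle$-equivariantly for a finite-order homeomorphism $\mathcal{F}$ of the glued surface.

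First I would dispose of the easy axioms for $D_F$. Axioms (ii) and (iii) are immediate, since the list of pairs of $D_F$ is a sublist of the pairs of $D_{F_1}$ and $D_{F_2}$. For axiom (i): forming a compatible pair forces $l_1,l_2\geq 1$, while axiom (iv) for $D_{F_i}$ rules out $l_i=1$ (an empty $\operatorname{lcm}$, namely $1$, would have to equal $n_{i,1}\geq 2$), so $l_1,l_2\geq 2$ and the combined list has length $l_1+l_2-2\geq 2>0$, consistent with $r=0$. Axiom (v) is a one-line computation: adding the congruences $\sum_j\tfrac{n}{n_{1,j}}c_{1,j}\equiv 0$ and $\sum_j\tfrac{n}{n_{2,j}}c_{2,j}\equiv 0\pmod n$ and removing the two deleted terms leaves $-\tfrac{n}{k}(c_{1,r}+c_{2,s})$, which is $\equiv 0\pmod n$ by Definition~\ref{defn:rs-comp}(ii).

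The one axiom that takes work is (iv), invariance of the $\operatorname{lcm}$ under deleting any single entry. Let $L_i$ be the $\operatorname{lcm}$ of the orders of $D_{F_i}$. Axiom (iv) for $D_{F_i}$ gives that deleting any one entry (in particular the $r$-th, resp.\ $s$-th) leaves $L_i$ unchanged, so the $\operatorname{lcm}$ of the orders of $D_F$ equals $\operatorname{lcm}(L_1,L_2)$; and if $g_{0,1}=g_{0,2}=0$ then $L_1=L_2=n$, so this $\operatorname{lcm}$ is $n$. Now fix a prime $p$, let $p^a$ be the exact power of $p$ dividing $\operatorname{lcm}(L_1,L_2)$, and delete one more entry of $D_F$, say one inherited from $D_{F_1}$; I must check $p^a$ still divides the resulting $\operatorname{lcm}$. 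If $p^a\mid L_2$ this is automatic. If $p^a\nmid L_2$, then $p^a$ is the exact power of $p$ dividing $L_1$, and $p^a\nmid k$ (since $k=n_{2,s}\mid L_2$), so the $r$-th entry $n_{1,r}=k$ of $D_{F_1}$ is not divisible by $p^a$; axiom (iv) for $D_{F_1}$ forces at least two entries of $D_{F_1}$ to be divisible by $p^a$ (else deleting the unique such entry would lower the $p$-part of $L_1$), and since neither of those is the $r$-th entry, at least one survives the deletion of both the $r$-th entry and the chosen one. This is exactly where compatibility condition (i), $n_{1,r}=n_{2,s}=k$, is used. The symmetric argument handles entries inherited from $D_{F_2}$, finishing (iv).

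Hence $D_F$ is a cyclic data set of degree $n$, and Theorem~\ref{thm:Nielsen} produces a periodic $F\in\Mod(S_{g(D_F)})$ in the corresponding conjugacy class. Substituting the cone-point data of $D_F$ into the Riemann--Hurwitz equation and comparing with the equations defining $g(D_{F_1})$ and $g(D_{F_2})$ — all unpaired cone points contribute unchanged, while each of the two deleted pairs had contributed $\tfrac1k-1$ — yields after simplification $g(D_F)=g(D_{F_1})+g(D_{F_2})+n/k-1=g$, which is also visible at once from the Euler characteristic of the glued surface. Finally, for reducibility: if $\orb_F$ were a sphere with three cone points we would need $g_{0,1}=g_{0,2}=0$ and $l_1+l_2=5$, hence (as $l_1,l_2\geq 2$) one of them, say $l_1$, equal to $2$; but then axiom (iv) forces $n_{1,1}=n_{1,2}=n$, and Riemann--Hurwitz gives $g(D_{F_1})=0$, contradicting $g(D_{F_1})\geq 1$. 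So $\orb_F$ is never a sphere with three cone points, whence $F$ is reducible by Gilman's theorem \cite{Gilman} — equivalently, the $n/k$ glued circles form an $\langle\mathcal{F}\rangle$-invariant reduction system. I expect the $\operatorname{lcm}$-stability in axiom (iv) to be the only genuinely non-routine step; the rest is bookkeeping with the data-set axioms and Riemann--Hurwitz.
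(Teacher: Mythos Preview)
Your proposal is correct and follows the same approach the paper intends: the paper states only that the lemma is ``a direct consequence of Definition~\ref{defn:data-set}'' and provides no further argument, so your axiom-by-axiom verification (together with the Riemann--Hurwitz computation for the genus and Gilman's criterion for reducibility) is exactly the routine check the paper leaves to the reader.
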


\begin{exmp}
Consider the irreducible Type 1 action $F \in \Mod(S_1)$ of order $4$ with $$D_F=(4, 0; (1, 2), (1, 4),(1,4)) \text{ and } D_{F^{-1}}=(4, 0; (1, 2), (3, 4),(3,4)).$$ Then $(F,F^{-1})$ forms a $(3, 3)$-compatible pair that defines a reducible periodic mapping class $G \in \Mod(S_2)$ of order $4$ with $D_G=(4, 0; (1, 2),(1,2), (1, 4), (3, 4))$ whose realization is shown in Figure \ref{Fig.: example of compatibility}.

\begin{figure}[h]
     \centering
        \includegraphics[width=45 ex]{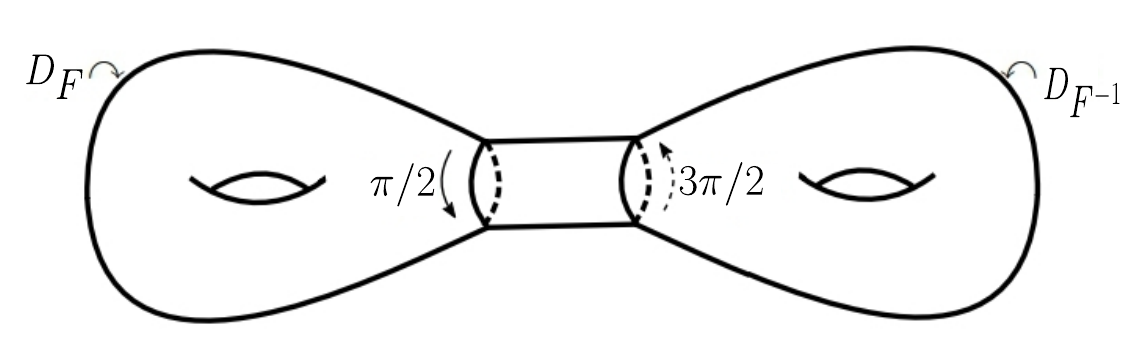}
         \caption{A realization of $G$ as a (3,3)-compatible pair.}
         \label{Fig.: example of compatibility}
\end{figure}

\end{exmp}
\subsection{The \T space of $S_g$} Let HypMet $\left(S_g\right)$ denote the set of all hyperbolic metrics on $S_g$ and $\operatorname{Diff}_0\left(S_g\right)$ denote the group of all diffeomorphisms of $S_g$ that are isotopic to identity. The \T space $\Teich(S_g)$ of $S_g$ is the quotient space $$
\operatorname{Teich}\left(S_g\right)=\operatorname{HypMet}\left(S_g\right) / \operatorname{Diff}_0\left(S_g\right)
$$
where $\operatorname{Diff}_0\left(S_g\right)\left(\operatorname{Diffeo}^{+}\left(S_g\right)\right)$ acts on $\operatorname{HypMet}\left(S_g\right)$ via pulling the metrics back, i.e.

$$
f \cdot \xi=f^*(\xi), \  \text{ for all } \ f \in \operatorname{Diff}_0\left(S_g\right) \text { and } \xi \in \operatorname{HypMet}\left(S_g\right) .
$$
The above action induces a natural action of $\Mod(S_g)$ on $\Teich(S_g)$ as follows: Given $F=[f] \in \operatorname{Mod}\left(S_g\right)$ and $[\xi] \in \operatorname{Teich}\left(S_g\right)$, we have $F \cdot[\xi]=\left[f^*(\xi)\right].$

Let $\gamma$ be an essential simple closed curve on $S_g$ ($g \geq 2$), $[\gamma]$ its homotopy class, and $[\xi] \in \Teich(S_g)$, where $\xi \in \operatorname{HypMet}\left(S_g\right)$. If $\gamma^{\prime} \in [\gamma]$ denotes the unique geodesic with respect to $\xi$ in $\left(S_g, \xi\right)$, then the length of $\gamma$ with respect to $[\xi]$, denoted by $\ell_{\gamma}([\xi])=\ell_{\gamma}(\xi)$, is defined as the $\xi$-length of $\gamma^{\prime}$. Let $\mathcal{P}=\left\{\gamma_1, \gamma_2, \ldots, \gamma_{3 g-3}\right\}$ be a pants decomposition of $S_g$. The above definition can be used to define the length parameters of $\mathcal{P}$ with respect to $[\xi]$ as the $(3g-3)$-tuple $(\ell_{\gamma_1}([\xi]), \ldots , \ell_{\gamma_{3g-3}}([\xi]))$ of lengths of the pants curves with respect to $[\xi]$. Let $\mathcal{S}=\left\{c_1, c_2, \ldots, c_n\right\}$ be a multicurve of $n$ disjoint simple closed curves on $S_g$, referred to as \textit{seams curves} (\cite[Chapter 10]{FM}) for the pants decomposition $\mathcal{P}$. Then the \textit{twist parameter of $\gamma_i$} with respect to $[\xi]$ is the signed distance (measuring along the unique $\xi$-geodesic representative $\gamma_i^{\prime}$ of $[\gamma_i]$) between the two points where the two common perpendiculars intersect $\gamma_i^{\prime}$ by following any one of the two seams curve passing through it (see \cite{FM, Bruno} for more details). We call a twist to be positive if one of the two common perpendiculars is on the left of the other (in the universal cover), and negative otherwise.

The following theorem due to Fenchel and Nielsen (\cite{Fenchel-Nielsen}) provides a parametrization of $\Teich(S_g)$ in terms of the Fenchel-Nielsen coordinates obtained via the length and twist parameters of curves in a pants decomposition of $S_g$.  
\begin{theorem}[\textbf{Fenchel-Nielsen}]\label{Thm:F-N}
Let $\mathcal{P}=\{\gamma_1, \gamma_2,\dots, \gamma_{3g-3}\}$ be a pants decomposition of $S_g$. Suppose $\ell_{\gamma_i}$ and $t_{\gamma_i}$ denotes the length and twist parameters of $\gamma_i$. Then 
 $$\Teich(S_g) \cong \mathbb{R}_+^{3g-3} \times \mathbb{R}^{3g-3} \cong \mathbb{R}^{6g-6}$$
  via the map 
  $$\chi=[\xi] \longmapsto (\ell_{\gamma_1}(\xi), \dots, \ell_{\gamma_{3g-3}}(\xi), t_{\gamma_1}(\xi), \dots, t_{\gamma_{3g-3}}(\xi)).$$
\end{theorem}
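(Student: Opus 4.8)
The plan is to establish the classical Fenchel--Nielsen parametrization by producing mutually inverse continuous maps between $\Teich(S_g)$ and $\mathbb{R}_+^{3g-3}\times\mathbb{R}^{3g-3}$, one of them being $\chi$. Fix the pants decomposition $\mathcal{P}=\{\gamma_1,\dots,\gamma_{3g-3}\}$; it cuts $S_g$ into $2g-2$ three-holed spheres $P_1,\dots,P_{2g-2}$, and the whole argument is organized around this combinatorial picture together with the seam curves $\mathcal{S}$ introduced before the statement. Two structural facts drive everything: \emph{(a)} a hyperbolic pair of pants with geodesic boundary is determined up to isometry by the ordered triple of its boundary lengths, and every triple in $\mathbb{R}_+^3$ is realized; and \emph{(b)} when two boundary geodesics of equal length are re-glued, the gluing isometry is unique up to the one-parameter family of rotations of the common circle, and, crucially, once a marking is fixed this ambiguity is an honest real number, since twistings differing by a full power of the Dehn twist about the gluing curve give the same unmarked surface but different marked ones.

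First I would prove fact \emph{(a)}. Cutting $P_i$ along the three perpendicular geodesic arcs joining its boundary components pairwise splits it into two congruent right-angled hexagons whose sides alternate between half-boundary-geodesics and seam arcs; conversely, doubling a right-angled hexagon across alternate sides yields such a pair of pants. By the right-angled-hexagon rule of hyperbolic trigonometry, a right-angled hexagon is determined up to isometry by the lengths of three pairwise non-adjacent sides, which may be chosen arbitrarily in $\mathbb{R}_+$; this gives \emph{(a)}.

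Next I would define $\chi$ and its inverse. Given $[\xi]\in\Teich(S_g)$, each $[\gamma_i]$ has a unique $\xi$-geodesic representative, and because the $\gamma_i$ are disjoint and pairwise non-homotopic these representatives are disjoint; cutting along them realizes the $P_i$ as hyperbolic pairs of pants with geodesic boundary, reading off the length parameters $\ell_{\gamma_i}([\xi])\in\mathbb{R}_+$. The twist parameter $t_{\gamma_i}([\xi])\in\mathbb{R}$ is then the signed displacement along $\gamma_i'$ between the feet of the perpendiculars coming from the two adjacent pants, measured along the seams with the sign convention stated in the paper; by fact \emph{(b)} the marking makes this a genuine real number. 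Conversely, from a tuple $(\ell_j,t_j)$ I would assemble the (unique, by \emph{(a)}) pairs of pants with the prescribed boundary lengths, glue them along each $\gamma_j$ by the orientation-compatible isometry displaced by $t_j$ from the seam-determined reference alignment, and transport the marking from the combinatorial model. Fact \emph{(b)} shows the resulting point of $\Teich(S_g)$ depends only on the tuple, and applying $\chi$ returns it, so $\chi$ is surjective; injectivity is the same bookkeeping in reverse, since equal $\chi$-images force a marking-preserving isometry on each $P_i$ by \emph{(a)} and these agree across the $\gamma_j$ because the twists agree, so they glue to a global marking-preserving isometry.

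The hard part will be the topological step: upgrading the continuous bijection $\chi$ to a homeomorphism. Continuity of $\chi$ follows from continuity of geodesic lengths on $\Teich(S_g)$; for the inverse one must arrange the pants, the gluing isometries, and the marking to vary continuously and coherently with the parameters, and then verify that both compositions are the identity. Alternatively, granting that $\Teich(S_g)$ is a $(6g-6)$-manifold, invariance of domain promotes the continuous bijection to a homeomorphism, since both sides have dimension $6g-6=\dim(\mathbb{R}_+^{3g-3}\times\mathbb{R}^{3g-3})$. The subtle points threaded through the whole argument are the sign and normalization conventions for the twist and the verification that the twist genuinely takes values in $\mathbb{R}$ rather than in $\mathbb{R}/\ell_{\gamma_i}\mathbb{Z}$, both of which rest on keeping careful track of the marking.
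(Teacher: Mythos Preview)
The paper does not supply its own proof of this theorem: it is recorded in the preliminaries as the classical Fenchel--Nielsen theorem, attributed to \cite{Fenchel-Nielsen} and used as a black box thereafter. So there is nothing to compare your argument against here.

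That said, your outline is the standard textbook proof (essentially the one in \cite{FM} or \cite{PBuserBook}): existence and uniqueness of hyperbolic pants via right-angled hexagons, the gluing construction to build the inverse map, and the twist-in-$\mathbb{R}$-not-$\mathbb{R}/\ell\mathbb{Z}$ point handled by the marking. The sketch is correct and nothing is missing at the level of ideas; the places you flag as ``hard'' (continuity of the inverse, coherence of markings under gluing) are exactly where the real work lies, and invoking invariance of domain is a legitimate shortcut once you know $\Teich(S_g)$ is a manifold of the right dimension. For the purposes of this paper, however, a citation suffices.
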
 

\section{Algorithms to describe fixed points of cyclic actions on the \T space}
\label{sec. 3}
In this section, we first provide an algorithm to describe the Fenchel-Nielsen coordinates of the unique fixed points of the \T maps induced by Type 1 irreducible actions on $S_g$ ($g\geq 2$), realized as semi-regular hyperbolic polygons as described in Theorem \ref{thm:geom_real}, followed by a few important applications of the algorithm. This naturally extends to an algorithm to describe the Fenchel-Nielsen coordinates of the fixed points of the \T map induced by an $(r,s)$-compatible pair $(F,F^{-1})$ where $F$ is an irreducible Type 1 action of $S_g$ ($g\geq 1$). 

\subsection{Branch loci of irreducible Type 1 actions.}
\begin{algo}\label{algo:main}Given a Type $1$ irreducible $F \in \Mod(S_g)$, an algorithm to describe the fixed point of its induced action on $\Teich(S_g)$ is summarised as follows:
\begin{enumerate}[\textit{Step} 1:]
\item Start with the unique hyperbolic polygon $\mathcal{P}_{F}$ which realizes $F$ as an isometry, as mentioned in Theorem \ref{thm:geom_real}.
\item Construct a pants decomposition in the following way:
\begin{enumerate}[\textit{Step} 2(a):]
\item Choose a homotopically non-trivial simple closed curve, possibly along the boundary of the polygon, and consider its orbit under the action.
\item If the number of homotopically disjoint curves in that orbit is $3g -3$, we consider this collection as our desired pants decomposition.
\item Otherwise, find a homotopically non-trivial simple closed curve in the complement of the previous orbit and consider its orbit under the action (the existence of such a curve is guaranteed). 
\item Repeat steps 2(a) - 2(c) until the total number of homotopically disjoint simple closed curves from distinct orbits adds up to $3g-3$.  
\end{enumerate}
\item Viewing the polygon $\mathcal{P}_F$ as a hyperbolic polygon in the upper-half plane $\mathbb{H}^2$ or the Poincar\'e disc $\mathbb{D}$ with proper identifications, we can compute the length and twist parameters associated with the corresponding pants curves and thereby obtain the Fenchel-Nielsen coordinates of the hyperbolic structure given by $\mathcal{P}_F$ using basic hyperbolic trigonometry.
\end{enumerate}
\end{algo}
\subsection{Applications of Algorithm \ref{algo:main}.} To demonstrate several applications of Algorithm~\ref{algo:main}, we begin with the Type 1 irreducible mapping classes of the surface \( S_g \) with orders \( 4g +2 \) and \( 4g \). These actions are significant as they represent the cyclic actions of the largest possible orders in the mapping class group \( \Mod(S_g) \) \cite{Harvey1966,AW}. For clarity and conciseness, we will detail the case when \( g = 2 \). The general case for \( g \geq 3 \) can be derived using a similar approach; however, we have chosen not to include it in this manuscript due to its complexity.
\begin{cor} \label{cor:order 10}
 \label{cor:order10_fixedpt}

Let $F \in \Mod(S_2)$ be a Type 1 irreducible action with $D_F=(10, 0; (1, 2), (2, 5), (1, 10))$. The Fenchel-Nielsen
 coordinates of the unique fixed point of the induced $\langle F_\# \rangle$-action on $\Teich(S_2)$ is of the form $(c_1, c_2, c_3, t_1, t_2, t_3) \in \Teich(S_2)$, where
\begin{eqnarray*}
&&c_1 = c_2=2\operatorname{arcosh}\left(\frac{2+\sqrt{5}}{2}\right), \ \ \ \ \ \  c_3 = 2\operatorname{arcsinh}\left(\sqrt{\frac{5+3\sqrt{5}}{2}}\right),\\
&&t_1= t_2= 2\operatorname{arcosh}\left(\frac{1}{4}\sqrt{25+9\sqrt{5}}\right), \ \text{ and } \ \ \ t_3=-\operatorname{arcosh}\left(\frac{3+\sqrt{5}}{2}\right).\\
\end{eqnarray*}
\end{cor}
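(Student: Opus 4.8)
\emph{Proof strategy.} The plan is to run Algorithm~\ref{algo:main} for the action $F$ with $D_F=(10,0;(1,2),(2,5),(1,10))$. By Theorem~\ref{thm:geom_real} (here $n_1=2$, so $k(F)=n=10$, $\theta(F)=2\pi/n_2=2\pi/5$, and $q=nc_3^{-1}/n_2=2$, $j=n_2-c_2=3$, so the side-pairing is $a_{m+1}^{-1}\sim a_{m+6}$, i.e.\ opposite sides are identified; cf.\ Example~\ref{ex:order10}), $F$ is realized as the $2\pi/10$ rotation about the centre $O$ of a \emph{regular} hyperbolic $10$-gon $\mathcal{P}_F$ all of whose interior angles equal $2\pi/5$. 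I would place $\mathcal{P}_F$ in the Poincar\'e disc $\mathbb{D}$ centred at $O$ and first record the elementary metric data. Decomposing $\mathcal{P}_F$ into $10$ congruent triangles with apex $O$, the apex angle is $2\pi/10$, and by the reflective symmetry of the regular polygon through the diagonal $OV_i$ each base angle is half the interior angle, i.e.\ $\pi/5$; the dual law of cosines then gives $\cosh s_0=\dfrac{\cos(\pi/5)}{1-\cos(\pi/5)}=2+\sqrt5$ for the common side length $s_0$ and the same value for the circumradius, using $\cos(\pi/5)=\tfrac{1+\sqrt5}{4}$.

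Next I would carry out Step~2: exhibit three essential simple closed curves on $S_2$, read off as loops along sides and diagonals of $\mathcal{P}_F$, that are pairwise disjoint and pairwise non-homotopic, hence a pants decomposition $\{\gamma_1,\gamma_2,\gamma_3\}$, chosen so that $\gamma_1,\gamma_2$ lie in a common $\langle F\rangle$-orbit while $\gamma_3$ comes from a second orbit. Here each $\langle F\rangle$-orbit of an essential curve has exactly $5$ homotopy classes: $F$ and $F^2$ are irreducible (the latter has quotient orbifold $(0;5,5,5)$), so no curve is fixed by $F$ or $F^2$, while $F^5$, whose quotient orbifold one checks to be $(0;2^6)$, is the hyperelliptic involution and so fixes every isotopy class; thus $\operatorname{Stab}[\gamma]=\langle F^5\rangle$ and the orbit has size $5$, of which one retains the mutually disjoint members. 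Because $F$ acts as an \emph{isometry} of the hyperbolic structure $\mathcal{P}_F$, writing $\gamma_2=F^{a}(\gamma_1)$ already forces $c_1=c_2$ and, once the accompanying seams are chosen $F^{a}$-equivariantly, $t_1=t_2$.

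For the length parameters I would realize each $\gamma_i$ by its geodesic representative, obtained as the axis in $\mathbb{D}$ of the corresponding element of the Fuchsian group generated by the side-pairings, and compute its translation length either as $2\operatorname{arcosh}\big(\tfrac12|\operatorname{tr}|\big)$ or, staying with basic hyperbolic trigonometry, by building the geodesic arc explicitly inside $\mathcal{P}_F$ (joining a side to its identified partner across $\mathcal{P}_F$) and solving the resulting right triangles via the laws of cosines and sines; this should yield $\cosh c_1=\cosh c_2=\tfrac{7+4\sqrt5}{2}$ and $\cosh c_3=6+3\sqrt5$, which via $\cosh 2x=2\cosh^2x-1=1+2\sinh^2x$ are exactly the stated $2\operatorname{arcosh}$ and $2\operatorname{arsinh}$ expressions. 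For the twist parameters I would fix seam curves as in Theorem~\ref{Thm:F-N}, locate inside each of the two pairs of pants bordering a given $\gamma_i$ the feet of the two relevant common perpendiculars using the right-angled hexagon and pentagon identities, and then measure the signed distance between these feet along the geodesic $\gamma_i$; the left/right sign convention makes $t_3$ negative, and one should obtain $\cosh(t_1/2)=\tfrac14\sqrt{25+9\sqrt5}$ and $\cosh|t_3|=\tfrac{3+\sqrt5}{2}$, the claimed values.

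I expect the main obstacle to be two-fold. First, Step~2 itself: writing down a pants decomposition that is genuinely embedded in $\mathcal{P}_F$ (disjointness and essentiality must be verified on the identified $10$-gon) and at the same time compatible with the $\langle F\rangle$-symmetry, so that the coincidences $c_1=c_2$ and $t_1=t_2$ are transparent rather than accidental. Second, and genuinely delicate, the twist computation: it demands a consistent choice of seams across the gluings, careful bookkeeping of orientations and signs when passing between adjacent pairs of pants, and a nontrivial round of right-angled-hexagon trigonometry. By contrast, once the curves and their geodesic representatives are pinned down, the length parameters are a routine (if lengthy) application of the hyperbolic law of cosines.
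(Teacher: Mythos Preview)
Your strategy is correct and matches the paper's proof almost exactly: both run Algorithm~\ref{algo:main} on the regular hyperbolic $10$-gon with opposite sides identified, build an $\langle F\rangle$-compatible pants decomposition (the paper takes the explicit midpoint curves $\gamma_1=M_1M_2M_7M_6$, $\gamma_2=F^2(\gamma_1)=M_3M_4M_9M_8$, $\gamma_3=M_{10}M_5$, and uses their $F$-images as the actual pants curves with the $\gamma_i$ as seams), and then compute lengths and twists by elementary trigonometry in the Poincar\'e disc. Your orbit-size argument via the hyperelliptic involution and your observation that the central triangle is equilateral are nice additions the paper does not make explicit; conversely, the paper pins down the geodesic representatives concretely (cf.\ Example~\ref{exmp:induced action}) and carries out the explicit coordinate computation for the feet $P_i,Q_i$ that you correctly flag as the delicate part.
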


\begin{proof} 
We will apply Algorithm~\ref{algo:main} to establish our assertion.
\begin{enumerate}[\textit{Step} 1:]
\item We start with the regular hyperbolic $10$-gon $\mathcal{P}_F$, with interior angles equal to $\frac{2\pi}{5}$ and opposite sides identified (see Figure \ref{Fig:order10}(B)). Using Theorem~\ref{thm:geom_real}, $F$ can be realized as a $\frac{2\pi}{10}$ rotation around the center of $\mathcal{P}_F$. 

\begin{figure}[h]
  \centering
  \begin{subfigure}{0.45\textwidth}
    \centering
    \includegraphics[width=\textwidth]{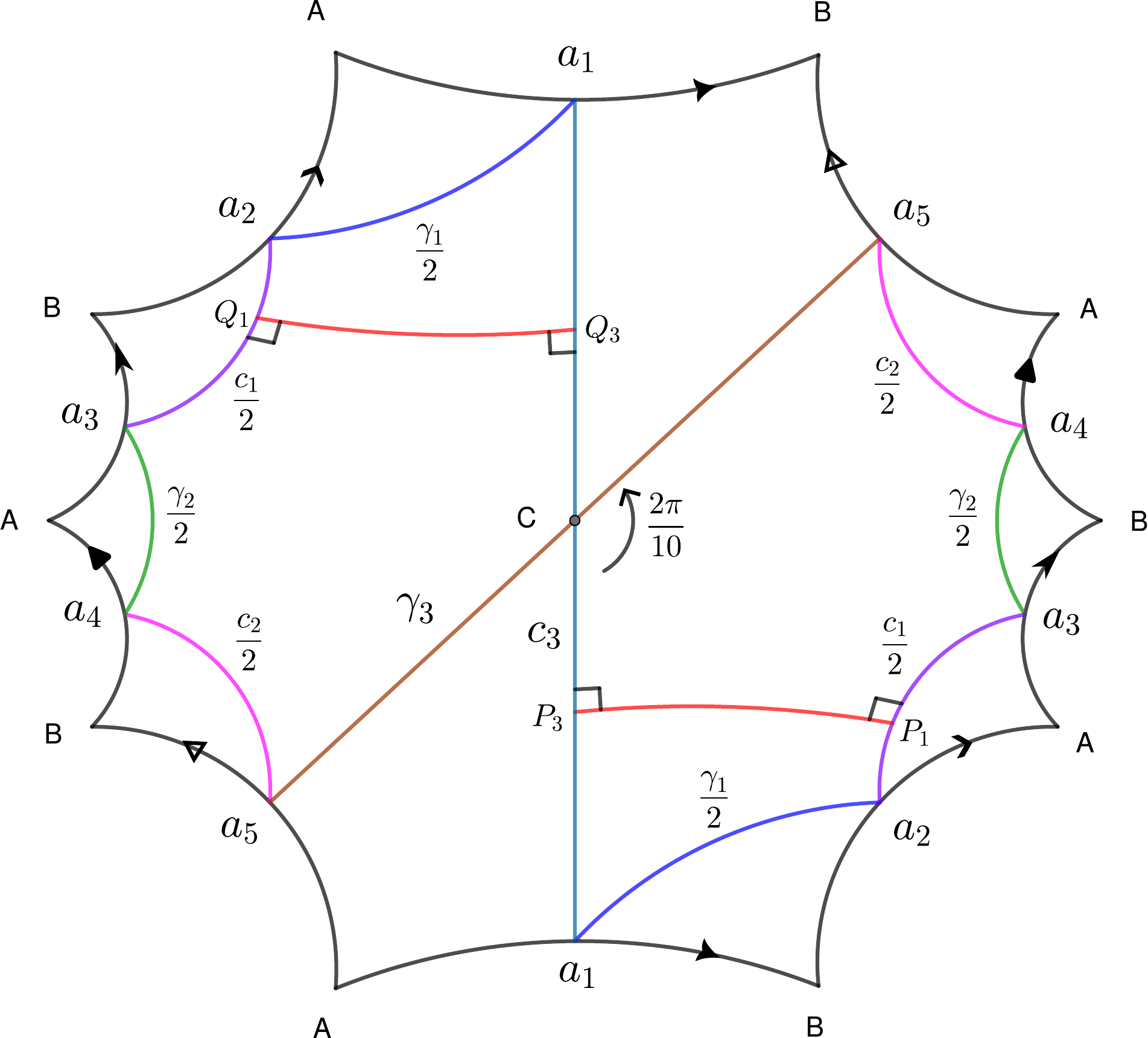}
    \caption{$10$-order induced action.}
    \label{fig: irregular hyp 10-gon}
  \end{subfigure}
  \hfill
  \begin{subfigure}{0.45\textwidth}
    \centering
    \includegraphics[width=\textwidth]{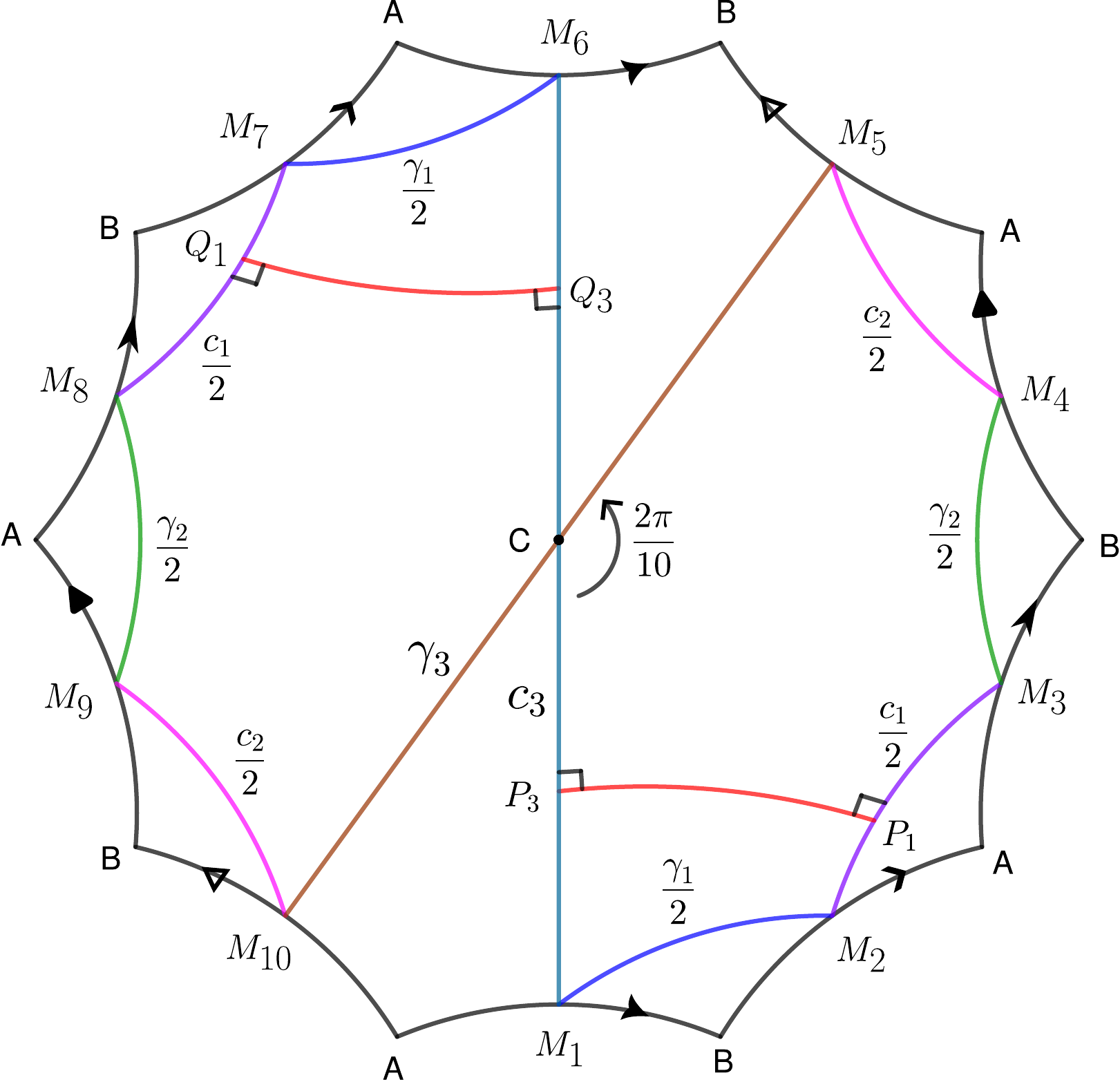}
    \caption{Fixed point of order $10$ action.}
    \label{fig: regular hyp 10-gon}
  \end{subfigure}
  \caption{Irreducible Type 1 $10$-order action on $\Teich(S_2)$.}
  \label{Fig:order10}
\end{figure}
\item We choose a pants decomposition of $S_2$  as follows: Consider the midpoints 
$M_1, \dots, M_{10}$ of the sides of $\mathcal{P}_F$ (in a counter-clockwise 
direction, in Figure \ref{Fig:order10}(B)). It is easy to see that the 
multicurve $\mathcal{P}=\{\gamma_1, \gamma_2, \gamma_3\}$ where $\gamma_1, \gamma_2, \gamma_3$ are the simple closed curves given by $M_1M_2M_7M_6$, $M_3M_4M_9M_8$, and $M_{10}M_5$ respectively and $\gamma_2=F^2(\gamma_1)$, forms a desired pants decomposition of $S_2$ in line with Algorithm \ref{algo:main}.
\item For computing the length and twist parameters of these pants curves, we first note that the simple closed curve $c_i$ is the image of $\gamma_i$ for $1\leq i \leq 3$, under the isometry $F$ as shown in Figure \ref{Fig:order10}(B).  It can also be shown that each $\gamma_i$ (and hence, its isometric image $c_i$) is the geodesic representative in its homotopy class. For a justification of this, we refer to Example \ref{exmp:induced action}. If there is no ambiguity, let $\gamma_i$ and $c_i$ continue to denote the hyperbolic lengths of the corresponding curves. Assuming $\mathcal{P}_F$ to be isometrically embedded in the Poincar\'e disc with its center coinciding with the center of the disc, the length and twist parameters (using hyperbolic trigonometry), are given below:

\textbf{Length parameters:} The hyperbolic side-length of $\mathcal{P}_F$ is  $$a=\operatorname{arcosh}\left(\frac{\cos^2 {\frac{\pi}{5}} + \cos \frac{\pi}{5}}{\sin^2 \frac{\pi}{5}}\right) = \operatorname{arcosh}\left(2+\sqrt{5}\right).$$ Since the hyperbolic length $CM_1=s$ (say), coincides with the \textit{inradius} of $\mathcal{P}_F$, we obtain (by \cite[Proposition 5.1]{Realization2})
$s=\operatorname{arcsinh}\left(\sqrt{\frac{5+3\sqrt{5}}{2}}\right).$ 
Thus, $\gamma_3=c_3=2s=2\operatorname{arcsinh}\left(\sqrt{\frac{5+3\sqrt{5}}{2}}\right)$, and for $i=1, 2$, $\gamma_i=c_i=2\operatorname{arcosh}\left(\cosh^2 {\frac{a}{2}} - \sinh^2 {\frac{a}{2}}\cos{\frac{2\pi}{5}}\right)=2\operatorname{arcosh}\left(\frac{2+\sqrt{5}}{2}\right).$

\textbf{Twist parameters:} For the pants decomposition $\{c_1,c_2,c_3\}$, consider the collection $\{\gamma_1,\gamma_2,\gamma_2\}$ as the seams curves. Due to the rotational symmetry of $\mathcal{P}_F$, the twist parameters are given by:  $|t_i|=P_1M_3+M_8Q_1=2P_1M_3, \ \text{ for } \ i=1, 2$ and $|t_3|=P_3Q_3=2CP_3$. Setting $R=\tanh \frac{s}{2}=\sqrt[4]{\frac{1}{5}},$ the midpoints $M_1, M_2, M_3$ are (in polar coordinates) 
$$M_1=Re^{-\iota \frac{\pi}{2}}, M_2=Re^{-\iota \frac{3\pi}{10}},M_3=Re^{-\iota \frac{\pi}{10}}.$$ 
The hyperbolic line passing through $M_2$ and $M_3$ is the circle: $(x-a)^2+(y-b)^2=r^2$, where 
$a=\frac{1+R^2}{R}{\sin \frac{\pi}{10} \cot \frac{\pi}{5}}=\sqrt{\frac{2+\sqrt{5}}{5}}, b=-\frac{1+R^2}{R} \sin \frac{\pi}{10}=-R,$ and $r=\sqrt{a^2+b^2-1}.$ Likewise, the hyperbolic line through $P_1$ and $P_3$ is the circle:
$x^2+(y-\frac{1}{b})^2=\frac{1}{b^2} - 1$. Thus, we obtain  
$$P_3=\iota (\frac{1}{b}+\sqrt{\frac{1}{b^2} - 1})=-\iota (\sqrt[4]{5} - \sqrt{\sqrt{5}-1})\ \text{ and } P_1=x_1 + \iota y_1$$ 
with 
$x_1=\frac{30 \sqrt{2 + \sqrt{5}} - 10 \sqrt{5 (2 + \sqrt{5})} - 4\sqrt{-925 + 415 \sqrt{5}}}{-75 + 43 \sqrt{5}} \ $  and 
 
$y_1=-\sqrt[4]{5} + \frac{2 \sqrt{10 (-803 + 362 \sqrt{5} + 6 \sqrt{225 - 95 \sqrt{5}} - 10 \sqrt{45 - 19 \sqrt{5}})}}{-75 + 43 \sqrt{5}}.$

Following our sign convention, we have 
$$t_3=-(2CP_3)=-2\ln|\frac{1+|CP_3|}{1-|CP_3|}|=-\ln{\frac{3 + \sqrt{5} + \sqrt{10 + 6 \sqrt{5}}}{2}}=-\operatorname{arcosh}\left(\frac{3+\sqrt{5}}{2}\right)$$ 
(here $|CP_3|$ denotes the Euclidean distance and $CP_3$ the hyperbolic distance).  Finally, using an well known formula for the hyperbolic distance in the Poincar\'e disc model, for $i=1, 2$, we get 
\begin{eqnarray*}
t_i&=&2P_1M_3=2\operatorname{arcosh}\left(1 + 2\frac{|P_1M_3|^2}{(1-x_1^2-y_1^2)(1-R^2)}\right)\\
& =& 2\operatorname{arcosh}\left(\frac{1}{4}\sqrt{25+9\sqrt{5}}\right) = \ln[\ \frac{1}{8}(17+9\sqrt{5}+3\sqrt{70+34\sqrt{5}})].
\end{eqnarray*}
\end{enumerate}
\end{proof}
\noindent It may be noted some of the computations preceding proof were done with the help of Mathematica~\cite{MATH24}.

\begin{cor}\label{cor:order8}
Let $F^{\prime} \in \Mod(S_2)$ be the Type 1 irreducible action given by $D_{F^{\prime}}=(8,0;(1, 2),(3, 8),(1, 8))$.  The Fenchel-Nielsen coordinates of the  unique fixed point of the induced $\langle F^{\prime}_\# \rangle$-action in $\Teich(S_2)$ is of the form $(\gamma_1, \gamma_2, \gamma_3, t_1, t_2, t_3)\in \Teich(S_2)$, where
\begin{eqnarray*}
&& \gamma_1=\gamma_2=t_1=t_2=2\operatorname{arcosh}\left(1+\sqrt{2}\right),\\ 
&&\gamma_3=2\operatorname{arsinh}\left(2 \sqrt{4+3\sqrt{2}} \right) \ \text{ and }t_3=-2\operatorname{arcosh}\left(\sqrt{2+\sqrt{2}}\right).
\end{eqnarray*}
\end{cor}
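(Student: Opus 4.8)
The plan is to run Algorithm~\ref{algo:main} on $F'$, following the proof of Corollary~\ref{cor:order10_fixedpt} almost line for line, with the regular hyperbolic octagon in place of the $10$-gon. Invoking Theorem~\ref{thm:geom_real} with $D_{F'}=(8,0;(1,2),(3,8),(1,8))$: since $n_1=2$, we are in the case $k(F')=n=8$ and $\theta(F')=2\pi/n_2=\pi/4$, so $\mathcal{P}_{F'}$ is the regular hyperbolic octagon with all interior angles $\pi/4$ and opposite sides identified, with $F'$ realized as the $2\pi/8$ rotation about its center $C$. From the identity $\cosh(a/2)=\cos(\pi/8)/\sin(\pi/8)=\cot(\pi/8)$ for this polygon, the common side length is $a=2\operatorname{arcosh}(1+\sqrt2)$, so $\cosh(a/2)=1+\sqrt2$; by \cite[Proposition~5.1]{Realization2} the inradius $s=CM_i$ also satisfies $\cosh s=1+\sqrt2$ (so here $s=a/2$), and consequently the circumradius $\rho=CV_i$ satisfies $\cosh\rho=\cosh s\cosh(a/2)=3+2\sqrt2$.

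Next I would fix the pants decomposition by mimicking the $10$-gon choice. Label the side-midpoints $M_1,\dots,M_8$ and vertices $V_1,\dots,V_8$ of $\mathcal{P}_{F'}$ counter-clockwise. Let $\gamma_1$ be the simple closed curve built from the geodesic arcs $M_1M_2$ and $M_5M_6$ joined via the identifications $M_1\sim M_5$ and $M_2\sim M_6$; let $\gamma_2=(F')^2(\gamma_1)$, the corresponding curve through $M_3,M_4,M_7,M_8$; and let $\gamma_3$ be the diagonal of $\mathcal{P}_{F'}$ joining $V_1$ to $V_5$ through $C$. One then checks, using the reflective symmetries of $\mathcal{P}_{F'}$ exactly as in Example~\ref{exmp:induced action}, that each $\gamma_i$ is the geodesic representative of its free homotopy class, that $\gamma_1,\gamma_2,\gamma_3$ are pairwise disjoint and essential, and that cutting along them yields two pairs of pants; thus $\mathcal{P}=\{\gamma_1,\gamma_2,\gamma_3\}$ is a pants decomposition of $S_2$.

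The length parameters then follow from elementary hyperbolic trigonometry: since $F'$ is an isometry and $\gamma_2=(F')^2(\gamma_1)$ we have $\ell_{\gamma_1}=\ell_{\gamma_2}$, and the hyperbolic law of cosines in the triangle $M_1V_2M_2$ (legs $a/2$, included angle $\pi/4$) gives $\cosh(\ell_{\gamma_1}/2)=\cosh^2(a/2)-\sinh^2(a/2)\cos(\pi/4)=1+\sqrt2$, that is, $\ell_{\gamma_1}=\ell_{\gamma_2}=2\operatorname{arcosh}(1+\sqrt2)$; while $\ell_{\gamma_3}=2\rho=2\operatorname{arcosh}(3+2\sqrt2)=2\operatorname{arsinh}(2\sqrt{4+3\sqrt2})$. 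For the twists I would embed $\mathcal{P}_{F'}$ in the Poincar\'e disc with $C$ at the origin (so that the $M_i$ lie at Euclidean radius $\tanh(s/2)$ on a common circle about $C$), take the seam multicurve $\{\gamma_1,\gamma_2,\gamma_2\}$ as in Corollary~\ref{cor:order10_fixedpt} (the rotational symmetry of $\mathcal{P}_{F'}$ then forcing $t_1=t_2$), write the geodesics carrying the pants curves and the relevant seams as Euclidean circles, solve for the feet of their common perpendiculars, and evaluate the signed hyperbolic distances between consecutive feet along $\gamma_1$ and along $\gamma_3$.

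I expect this last computation to be the only real obstacle. Exactly as in the order-$10$ case, the work lies in pinning down the correct seam arcs, tracking the sign of each twist under the left/right convention, and pushing through the resulting nested-radical algebra (with the aid of \cite{MATH24}); Steps~1--3 above are routine once the polygon and the three pants curves are in place. The expected output is $t_1=t_2=2\operatorname{arcosh}(1+\sqrt2)$ and $t_3=-2\operatorname{arcosh}(\sqrt{2+\sqrt2})$, in agreement with the statement.
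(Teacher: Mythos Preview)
Your proposal is correct and follows essentially the same route as the paper: the same regular octagon $\mathcal{P}_{F'}$, the same pants curves through the side-midpoints and the vertex-diagonal, and the same Poincar\'e-disc computation of the twists via feet of common perpendiculars. One small point you will find once you actually carry out the twist computation: in the order-$8$ case the feet $Q_1,Q_2$ of the common perpendicular between $\gamma_1$ and $\gamma_3$ land \emph{exactly} on the midpoints $M_1,M_5$, so $t_1=t_2=2\,M_1M_2=\gamma_1$ drops out without any nested-radical algebra---this is the shortcut the paper exploits and is the reason the identities $t_1=\gamma_1$, $t_2=\gamma_2$ hold on the nose here (unlike in the order-$10$ case).
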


\begin{proof}
We will apply Algorithm~\ref{algo:main} to prove our assertion.
\begin{enumerate}[\textit{Step} 1:]
\item As in the proof of Corollary \ref{cor:order 10}, we realize $F^{\prime}$ as the $\frac{2\pi}{8}$ rotation of the regular hyperbolic $8$-gon $\mathcal{P}_{F^{\prime}}$ with opposite sides identified as shown in Figure \ref{fig:pants_c8action} and interior angles $2\pi/8$. 

\begin{figure}[h]
  \centering
    \includegraphics[width=40ex]{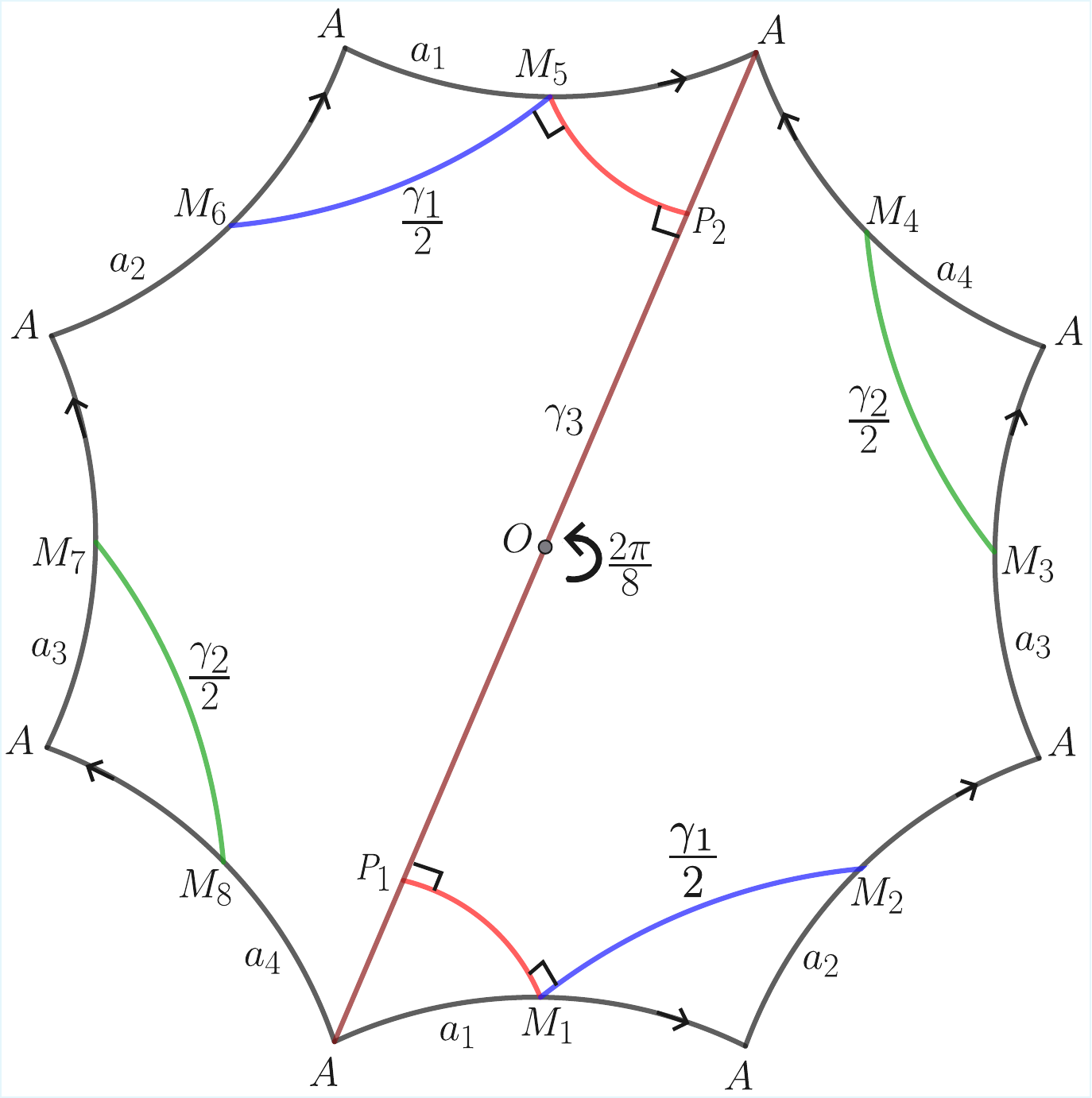}
    \caption{Irreducible Type 1 8-order action on $\Teich(S_2)$.}
    \label{fig:pants_c8action}
\end{figure}
\item Following Algorithm \ref{algo:main}, we choose a suitable pants decomposition $\{\gamma_1,\gamma_2 = F'^2(\gamma_1),\gamma_3\}$ of $S_2$ as indicated in Figure \ref{fig:pants_c8action} with respective isometric images $c_1,c_2,c_3$ under $F^{\prime}$.
\item To compute the length and twist parameters of $\gamma_1,\gamma_2,\gamma_3$, as in Corollary \ref{cor:order 10}, $\mathcal{P}_{F'}$ is viewed in the Poincar\'e disc with its center identified with the center of the disc.

\textbf{Length parameters:} The side-length of $\mathcal{P}_{F^{\prime}}$, is given by
$$a=AM_1A=\operatorname{arcosh}\left(\frac{\cos^2 {\frac{\pi}{8}} + \cos \frac{2\pi}{8}}{\sin^2 \frac{\pi}{8}}\right) = \operatorname{arcosh}\left(5+4\sqrt{2}\right).$$ As the \textit{inradius} of $\mathcal{P}_{F^{\prime}}$ is given by the hyperbolic length $OM_1=s$ (say), (using \cite[Proposition 5.1]{Realization2},) we have  $s=\operatorname{arsinh}\left(\sqrt{2+2\sqrt{2}}\right)$. 
From the right angled hyperbolic triangle $\bigtriangleup OAM_1$, we obtain the length  
$$\gamma_3=c_3=2 OA=2\operatorname{arsinh}\left(\frac{\sinh{s}}{\sin{\frac{\pi}{8}}}\right)=2\ln[\ 3+2\sqrt{2}+2\sqrt{4+3\sqrt{2}}].$$ For $i=1, 2$, we have 
$$ \ \ \ \ \gamma_i=c_i=2 M_1M_2=2\operatorname{arcosh}\left(\cosh^2 {\frac{a}{2}} - \sinh^2 {\frac{a}{2}}\cos{\frac{2\pi}{8}}\right)=2\ln[\ 1+\sqrt{2}+\sqrt{2+2\sqrt{2}}].$$ 

\textbf{Twist parameters.} For the pants decomposition $\{c_1,c_2,c_3\}$, consider the collection $\{\gamma_1,\gamma_2,\gamma_2\}$ as the seams curves. Using the rotational symmetry of $\mathcal{P}_{F^{\prime}}$ and our sign convention, we observe that $t_3=-(P_1P_2)=-(2 OP_1)$, and for $i=1, 2$, $t_i=+(Q_1M_2+M_6Q_2)=+(2Q_1M_2)$ where $Q_i$, $i=1,2$, (respectively, $P_i$, $i=1,2$,) denote the feet of the common perpendicular between the hyperbolic lines $\gamma_1$ and $\gamma_3$ lying on $\gamma_1$ ((respectively, on $\gamma_3$) as shown in Figure \ref{fig:pants_c8action}.  
It is straightforward to check that according to our sign convention, $t_1=t_2 >0$ and $t_3<0$. 

It can be checked that $Q_1$ and $Q_2$ coincide with the midpoints $M_1$ and $M_5$ respectively (see Figure \ref{fig:pants_c8action}): 
In fact, setting $R=\tanh{\frac{s}{2}}=\sqrt{\sqrt{2}-1}$, we have $M_1=(0, -R)$ and $M_2=(\frac{R}{\sqrt{2}}, -\frac{R}{\sqrt{2}})$. Thus, the geodesic $\gamma_1$ passing through $M_1$ and $M_2$ is given by the circle $(x-a)^2+(y-b)^2=r^2$, where $a=\sqrt{-\frac{1}{2} + \frac{1}{\sqrt{2}}}, b= \frac{a}{-\sqrt{2}+1} = -\sqrt{\frac{1}{2} + \frac{1}{\sqrt{2}}}$, and  $r=\sqrt{a^2+b^2-1}=R.$ 

The geodesic $\gamma_3$ is given by the line $y=\tan{\frac{3\pi}{8}} x = (\sqrt{2}+1) x$. Consequently, the common perpendicular between $\gamma_1$ and $\gamma_3$ passing through $P_1$ and $Q_1$, is given by $(x+a)^2+(y-b)^2=r^2$ where $a, b$ and $r$ are as above. Thus, the foot $Q_1$ of the common perpendicular on $\gamma_1$ is given by $Q_1=(0, -\sqrt{\sqrt{2}-1})=(0, -R)=M_1$. Similar arguments yield $Q_2=M_5$ implying that, for $i=1, 2$, 
$$t_i=2M_1M_2=\gamma_i.$$ 
Likewise, we have $P_1=(-\frac{1}{2} \sqrt{-6+5\sqrt{2}-2\sqrt{2(10-7\sqrt{2})}}, -\frac{1}{2} \sqrt{2+3\sqrt{2}-2\sqrt{2(2+\sqrt{2})}})$. Finally, using \cite[Proposition 4.3]{Anderson}, we have 
$$t_3=-2(OP_1)=-2\operatorname{arcosh}{\sqrt{2+\sqrt{2}}}=-2\ln[\sqrt{1+\sqrt{2}} + \sqrt{2+\sqrt{2}}].$$ This completes the proof.
\end{enumerate}
\end{proof}
We conclude this subsection with an immediate consequence of Corollary \ref{Cor:order 10} describing the fixed point of a higher power of the irreducible Type 1 action of order $10$ on $S_2$ mentioned in the corollary.
\begin{exmp}
Let $\tilde{F}$ be the irreducible Type 1 action on $S_2$ given by $D_{\tilde{F}}=(5,0 ;(2,5),(2,5),(1,5))$. It can be checked that $\tilde{F}=F^2$ where $F$ is the order $10$ action on $S_2$ mentioned in Corollary \ref{Cor:order 10}. In fact, $\tilde{F}$ can be realized as a $\frac{2 \pi}{5}$ rotation of the same polygon $\mathcal{P}_F$. Moreover, as $\tilde{F}$ be the irreducible Type 1 action,  its induced action on $\Teich\left(S_2\right)$ has a unique fixed point (\cite[Proposition 4.1]{Realization2}), it follows that $\tilde{F}_{\#}$ and $F_{\#}$ have the same fixed point in $\Teich\left(S_2\right)$, (whose Fenchel-Nielsen coordinates) described in Corollary \ref{Cor:order 10}.
\end{exmp}

\subsection{Branch loci of $(r,s)$ compatible pairs of Type 1 irreducible actions}\label{sec.4} 
\begin{algo}\label{algo:compatible_pair} Given a Type 1 irreducible $F \in \operatorname{Mod}\left(S_g\right)$ of order $n$, we consider the compatible pair $G:=(F, F^{-1})$ realized via compatibility along a common fixed point. Then an algorithm to describe the Fenchel-Nielsen coordinates of the fixed points of the induced $\langle G_{\#}\rangle$ action on $\Teich(S_{2 g})$ is outlined below:
\begin{enumerate}[\textit{Step} 1:]
\item Start with the unique hyperbolic polygon $\mathcal{P}_F$ realizing $F$ as an isometry as mentioned in Theorem \ref{thm:geom_real}.
\item If $\mathcal{P}_F$ has $k$ sides, obtain a hyperbolic $2k$-gon $\mathcal{P}_G$ representing a hyperbolic structure on $S_{2g}$ that realizes $G$ as an isometry, as follows:
\begin{enumerate}[\textit{Step} 2(a)]
\item Construct a hyperbolic $(k+1)$-gon $\mathcal{P}^{\prime}$, with $k$ sides of equal length such that the corresponding $(k-1)$ interior angles bounded by those $k$ sides are equal. The remaining side, distinguished as $\ell$, is not necessarily equal to the other $k$ sides. 
\item In order that $\mathcal{P}^{\prime}$ realizes a hyperbolic structure on $S_{g}^1$ with $F$ as an isometry, it follows from basic hyperbolic trigonometry that the remaining two interior angles of $\mathcal{P}^{\prime}$ must be equal to each other (not necessary equal to the $k-1$ interior angles mentioned above). 
\item On these $k$ equal sides apply the side-pairing relations as in the polygon $\mathcal{P}_F$. Note such a polygon $\mathcal{P}^{\prime}$ always exist representing $F$ as an isometric action on the hyperbolic surface $S_{g}^1$.
\item Construct another hyperbolic ($k+1$)-gon $\mathcal{P}^{\prime \prime}$ by reflecting $\mathcal{P}^{\prime}$ along its distinguished side $\ell$. $\mathcal{P}^{\prime \prime}$ then has the same side-pairing relations as of $\mathcal{P}^{\prime}$.
\item Obtain the desired hyperbolic $2k$-gon $\mathcal{P}_G$ by combining $\mathcal{P}^{\prime}$ and $\mathcal{P}^{\prime \prime}$ identified along the common distinguished side $\ell$. Note that by construction, all sides of $\mathcal{P}_G$ has equal length, $(2k-2)$ interior angles are equal and the remaining two interior angles are also equal to each other (but not necessarily to the $(2k-2)$ interior angles).  
\end{enumerate} 
\item Construct a suitable pants decomposition on $\mathcal{P}_G$ in the following way:
\begin{enumerate}[\textit{Step} 3(a)]
\item Pick the common distinguished side of $\mathcal{P}^{\prime}$ and $\mathcal{P}^{\prime \prime}$ as the first pants curve. Call it $\gamma$.
\item  Choose a homotopically non-trivial simple closed curve, possibly along the boundary of the polygon $\mathcal{P}^{\prime}$, disjoint from $\gamma$ up to homotopy, and consider its orbit under the action.
\item If the number of homotopically disjoint curves in that orbit is $3 g-2$, consider their counterparts from $\mathcal{P}^{\prime \prime}$. These $6g-4$ curves together with $\gamma$, form a pants decomposition for $S_{2 g}$.
\item Otherwise, find a suitable nontrivial simple closed curve in $\mathcal{P}^{\prime}$, lying in the complement of the previous orbit and $\gamma$ (such a curve always exists), and consider its orbit under the action.
\item Repeat the steps 3(b) - 3(d) until the total number of homotopically disjoint simple closed curves obtained from distinct orbits in the polygon $\mathcal{P}^{\prime}$ (other than $\gamma$) adds up to $3 g-2$.
\end{enumerate}
\item Viewing $\mathcal{P}_G$ inside the Poincaré disc with proper identifications, compute the length and twist parameters associated with the corresponding chosen pants curves thereby obtaining the Fenchel-Nielsen coordinates of the hyperbolic structure with the help of hyperbolic trigonometry.
\end{enumerate}
\end{algo}
Applying Algorithm \ref{algo:compatible_pair}, we describe the fixed points of a cyclic action of order $4$ on $\Teich(S_2)$ as follows.
\begin{cor}\label{Cor:Type2}
Let $G \in \Mod(S_2)$ be the Type 2 action of order $4$ with $D_G = (4, 0; (1, 2)^{[2]}, (1, 4), (3, 4))$ realized as a $(3,3)$-compatible pair $(F,F^{-1})$, where $D_F = (4,0;(1,2),(1,4),(1,4))$. Then the fixed points of the induced $\langle G_{\#} \rangle$-action on $\Teich(S_2)  \approx \mathbb{R}^6$ is a two dimensional submanifold whose Fenchel-Nielsen coordinates are given by $\left\{\left(\gamma_1, \gamma_2, \gamma_1, t, 0,-t\right)\right\}$, where 
\begin{eqnarray*}
&& \gamma_1=2 \operatorname{arcosh}\left(\cosh \frac{s}{2} \sin \alpha\right),  \gamma_2=\operatorname{arcosh}\left(\cosh ^4 s-2 \cosh ^3 s+2 \cosh s\right),\\
&& t= \frac{\gamma_1}{2}-\operatorname{arcoth}\left(\sinh (s-x) \tan \alpha\right), \ \text{and }x=\operatorname{artanh}\left(\operatorname{coth} \frac{s}{2} \cos \alpha\right)
\end{eqnarray*}
with $s>\operatorname{arcosh}\left(\cot ^2 \frac{\alpha}{2}\right)$, and $0< \alpha<\frac{\pi}{3}$.
\end{cor}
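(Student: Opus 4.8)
The plan is to run Algorithm~\ref{algo:compatible_pair} with $F\in\Mod(S_1)$ the order-$4$ rotation with $D_F=(4,0;(1,2),(1,4),(1,4))$ and $G=(F,F^{-1})$ the associated $(3,3)$-compatible pair on $S_2$. Since $n_1=2$ in $D_F$, the polygon $\mathcal{P}_F$ of Theorem~\ref{thm:geom_real} has $k(F)=4$ sides; as $g(D_F)=1$ this is the flat square with opposite sides identified, with $F$ the $\pi/2$ rotation about the order-$4$ fixed point at its centre. Removing the $F$-invariant disc around that centre and making the complement hyperbolic (Step~2), I realize a hyperbolic pentagon $\mathcal{P}'$ for a hyperbolic structure on $S_1^1$ on which the restricted $F$ is an isometry: four of its sides have equal length and carry the torus side-pairing $aba^{-1}b^{-1}$ inherited from $\mathcal{P}_F$, the three interior angles between consecutive such sides are equal, and the fifth side---the boundary $\ell$ of the removed disc---has two equal angles at its endpoints. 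These conditions leave $\mathcal{P}'$ with exactly two real moduli, which I name $s$ and $\alpha$; demanding that $\mathcal{P}'$ be a genuine convex hyperbolic pentagon with the correct vertex-angle identifications forces $0<\alpha<\pi/3$ and $s>\operatorname{arcosh}(\cot^2(\alpha/2))$. Reflecting $\mathcal{P}'$ across $\ell$ gives a congruent pentagon $\mathcal{P}''$ (carrying the $F^{-1}$-structure), and gluing the two along $\ell$ produces the hyperbolic octagon $\mathcal{P}_G$, which carries the reflection symmetry $\sigma$ across $\ell$ and represents a hyperbolic structure on $S_2$ realizing $G$ as an isometry (cf. \cite{Realization1}).

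Next (Step~3) I build the pants decomposition. A pants decomposition of $S_2$ needs $3\cdot2-3=3$ curves. I take $\gamma:=\ell$ as one of them---the middle one in the coordinate ordering, so it contributes the length $\gamma_2$ and the vanishing twist---then pick a homotopically non-trivial simple closed curve $\gamma_1\subset\mathcal{P}'$ disjoint from $\gamma$ whose $F$-orbit contributes a single curve to the decomposition (here $3g-2=1$ with $g=g(D_F)=1$), and set $\gamma_3:=\sigma(\gamma_1)\subset\mathcal{P}''$; then $\{\gamma_1,\gamma,\gamma_3\}$ is the desired pants decomposition, and as in Corollaries~\ref{cor:order 10} and \ref{cor:order8} each of these curves is the geodesic in its homotopy class by the symmetry of $\mathcal{P}_G$. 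Because $\sigma$ is an orientation-reversing isometry fixing $\gamma$ setwise and swapping $\gamma_1\leftrightarrow\gamma_3$, it preserves lengths and negates twists; hence $\ell_{\gamma_1}=\ell_{\gamma_3}$, the twist along $\gamma$ is $0$, and the twists along $\gamma_1,\gamma_3$ are negatives of one another---which is why the coordinates take the form $(\gamma_1,\gamma_2,\gamma_1,t,0,-t)$.

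For Step~4, I place $\mathcal{P}_G$ in the Poincar\'e disc respecting its symmetry and compute by elementary hyperbolic trigonometry on right-angled subtriangles of $\mathcal{P}'$: the half-length of $\gamma_1$ comes out to $\operatorname{arcosh}(\cosh(s/2)\sin\alpha)$, so $\gamma_1=2\operatorname{arcosh}(\cosh(s/2)\sin\alpha)$, and the length of $\gamma=\ell$ depends only on $s$ and equals $\operatorname{arcosh}(\cosh^4 s-2\cosh^3 s+2\cosh s)=\gamma_2$. For the twist $t$ along $\gamma_1$ I write the geodesics supporting $\gamma_1$ and $\gamma$ as circles/lines in the disc, solve for the feet of their common perpendicular---this is where the auxiliary length $x=\operatorname{artanh}(\coth(s/2)\cos\alpha)$ enters the right-triangle relations---and read off the signed offset along $\gamma_1$ between that foot and the chosen seam, obtaining $t=\frac{\gamma_1}{2}-\operatorname{arcoth}(\sinh(s-x)\tan\alpha)$ in the paper's sign convention. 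Finally, $\mathcal{P}_G$ is a fixed point of $G_\#$ by construction; conversely, by \cite{Realization2} the fixed locus of $G_\#$ in $\Teich(S_2)$ is a totally geodesic submanifold isometric to $\Teich(\orb_G)$, where $\orb_G$ is a sphere with cone points of orders $2,2,4,4$, hence of real dimension $2(3\cdot0-3+4)=2$; and every hyperbolic structure realizing $G$ contains the $G$-invariant geodesic $\gamma$, cutting along which returns the compatible-pair data, so the two-parameter family $(s,\alpha)$ is onto this fixed locus. As $(s,\alpha)\mapsto(\gamma_1,\gamma_2,\gamma_1,t,0,-t)$ is smooth and injective, the fixed locus is exactly the asserted $2$-dimensional submanifold.

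The decisive step is the explicit twist computation in Step~4: locating the common perpendicular between $\gamma_1$ and $\gamma$ in the disc model and extracting a closed form for the signed twist while carrying \emph{both} parameters $s,\alpha$ (the one-parameter computations in Corollaries~\ref{cor:order 10} and \ref{cor:order8} already needed a careful normalization of the disc and delicate sign bookkeeping; the two-parameter case is heavier). A secondary difficulty is pinning down the precise admissible range of $(s,\alpha)$ and checking that the parametrization is a bijection onto the fixed locus---equivalently, that the reflected, untwisted gluing is forced and that distinct parameter pairs give non-isometric marked surfaces.
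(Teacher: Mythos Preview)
Your proposal is correct and follows essentially the same route as the paper: you run Algorithm~\ref{algo:compatible_pair} with the square realization of $F$, build the pentagon $\mathcal{P}'$ with the two moduli $(s,\alpha)$, reflect to obtain $\mathcal{P}_G$, take the pants decomposition $\{\gamma_1,\ell,\sigma(\gamma_1)\}$, and extract the length and twist parameters by hyperbolic trigonometry on the resulting right-angled pieces---the paper carries out exactly this computation with the same auxiliary point $x$ and the same perpendiculars. The one substantive addition in your write-up is the final paragraph, where you invoke the identification of the fixed locus with $\Teich(\orb_G)$ from \cite{Realization2} to confirm that the $2$-parameter family is \emph{all} of the fixed set; the paper's proof takes this for granted, so your version is in that respect slightly more complete.
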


\begin{proof}  
\begin{enumerate}[\textit{Step} 1:]
\item It can be checked that the action $F$ of order $4$ on $S_1$ given by $D_F=\left(4,0 ;(1,2),(1,4)^{[2]}\right)$ can be realized as the $\pi/2$ rotation of a square. 
\item We construct a desired hyperbolic octagon $\mathcal{P}_G$ realizing $G$ as a hyperbolic isometry on $S_2$ (as mentioned in Algorithm \ref{algo:compatible_pair}), with the side-pairing relations indicated by the arrows in Figure \ref{Fig:order4_compatibility}(A). Here six interior angles are equal to $\alpha$ (say) and the remaining two interior angles equal to $2 \beta$. 

\begin{figure}[H]
  \centering
  \begin{subfigure}{0.43\textwidth}
    \centering
    \includegraphics[width=\textwidth]{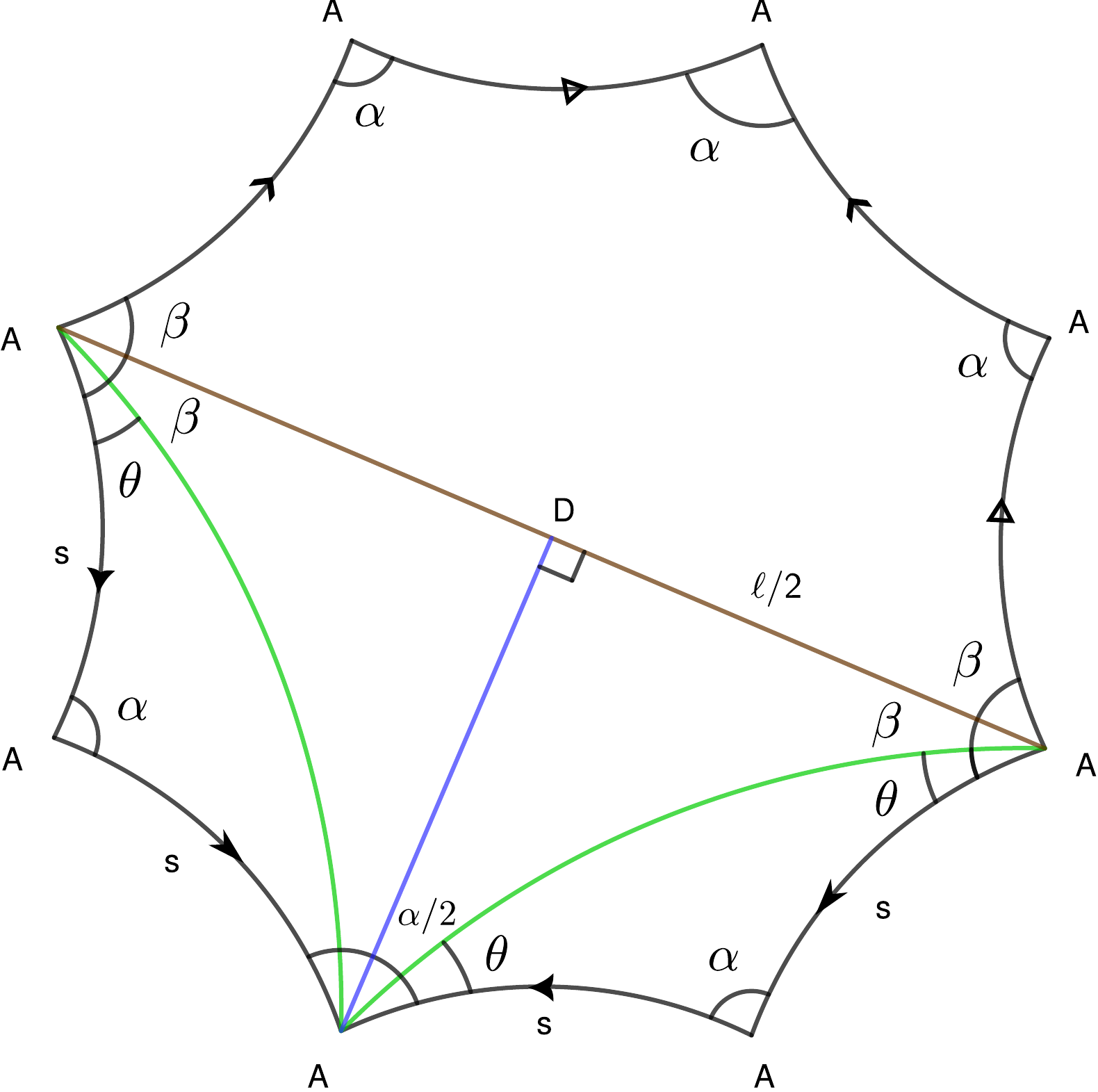}
    \caption{}
    \label{528gon}
  \end{subfigure}
  \hfill
  \begin{subfigure}{0.43\textwidth}
    \centering
    \includegraphics[width=\textwidth]{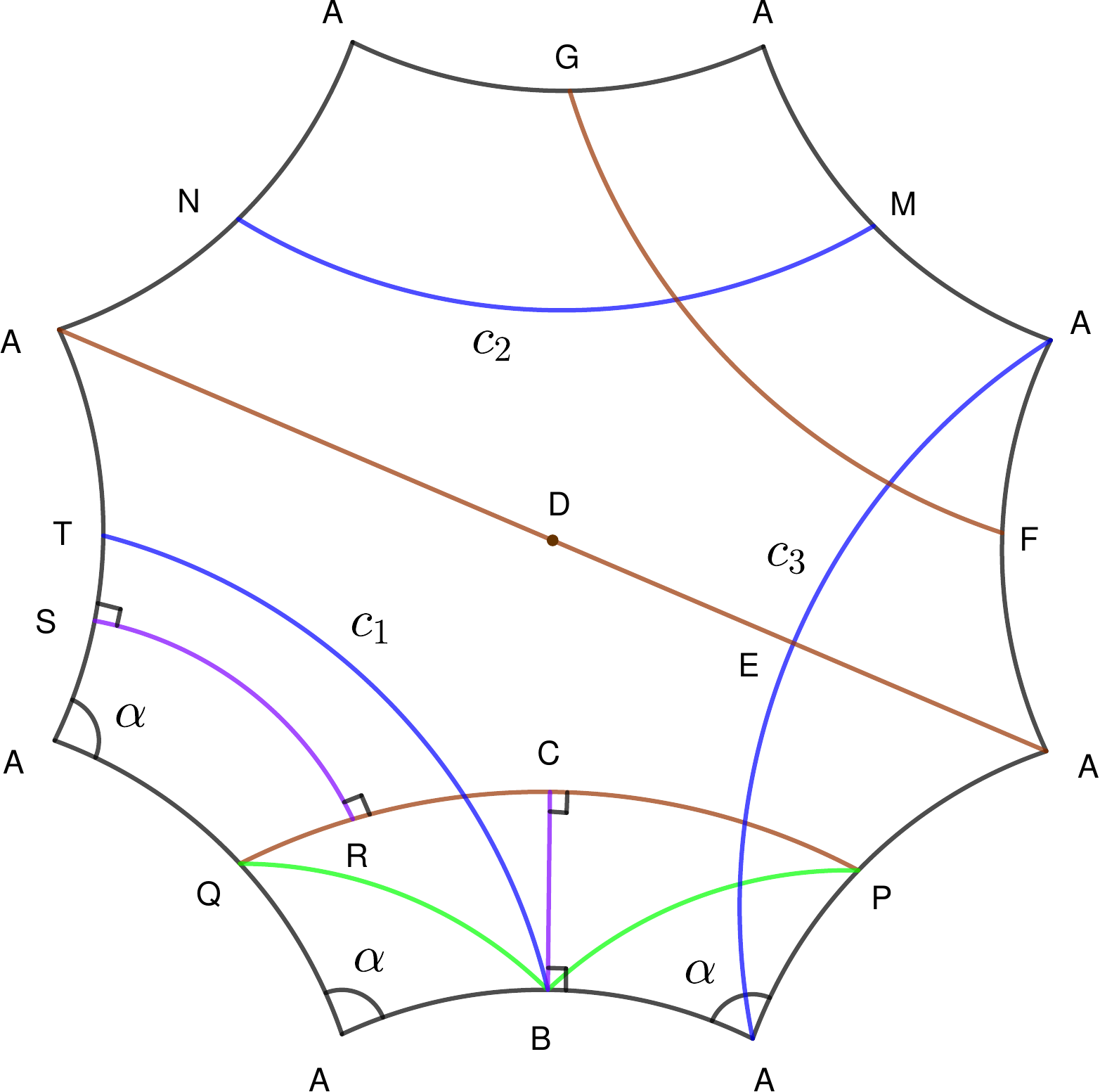}
    \caption{}
    \label{fig: L&T of 4 order compatibility}
  \end{subfigure}
  \caption{A Type 2 action of order $4$ on $S_2$.}
  \label{Fig:order4_compatibility}
\end{figure}
 
Using the Gauss-Bonnet Theorem and area formula of hyperbolic polygons, we obtain $3 \alpha+2 \beta=\pi$. It follows from basic hyperbolic trigonometry that such a hyperbolic octagon exists if and only if $\alpha \in(0, \pi / 3)$ and $\beta=\pi / 2-3 \alpha / 2$. Note that as mentioned in Step 2(e) of Algorithm \ref{algo:compatible_pair}, all sides of $\mathcal{P}_G$ have equal 
length, say, $s$. In fact, $\mathcal{P}_G$ is constructed from two identical pieces of hyperbolic pentagons $\mathcal{P}^{\prime}$ and $\mathcal{P}^{\prime \prime}$ identified across the line segment $A D A$ (see Figure \ref{Fig:order4_compatibility}(A)). Each of the pentagons represents a copy of the compact surface $S_{1,1}$ such that after capping off its boundary (or equivalently, collapsing the side $A D A$ of the pentagon to a point), it yields a copy of $S_1$ and the restriction of $G$ onto each pentagon resembles to the (isometric) actions $F$ and $F^{-1}$ respectively. Thus four sides (excluding the $A D A$) of each pentagon must have the same length.

Thus, it suffices to describe the action of $G$ on the pentagon $\mathcal{P}^{\prime}$. Using hyperbolic trigonometry, it follows that the line segment $AD$ divides $\mathcal{P}^{\prime}$ into two congruent quadrilaterals (see Figure \ref{Fig:order4_compatibility}(A)). Thus, $A D$ bisects the angle $A$ and perpendicularly bisects the $A D A$ at $D$. The side length $d$ of either isosceles triangle opposite to the vertex $A$ is given by $
\cosh d=\cosh ^2 s-\sinh ^2 s \cos \alpha$ and using using \cite[Theorem 2.2.1]{PBuserBook}, we have 
$\cosh s=\frac{\cos \theta+\cos \theta \cos \alpha}{\sin \theta \sin \alpha}=\cot \theta \cot \frac{\alpha}{2}.$ As $0<\theta<\alpha / 2<\pi / 6$, from the above identities, it follows that $\cosh s >\cot ^2 \frac{\alpha}{2}$ and hence, $s>\operatorname{arcosh}\left(\cot ^2 \frac{\pi}{6}\right)=\ln [3+2 \sqrt{2}]$.  

\item An easy way to obtain pants decomposition for $S_2$, using $\mathcal{P}_G$, is the multicurve ~$\{A B A, A D A, A M A\}$ as shown in Figure \ref{Fig:order4_compatibility}(B). It now remains to compute the desires length and twist parameters of the same. 

\textbf{Length Parameters:} We first note that the common side $A D A$ defines a smooth geodesic on $S_2$ called $\gamma_2$ (say), as its angles of intersection at both ends are equal. Denoting the length of $ADA$ by $\ell$, we see that $\tanh \frac{\ell}{2}=\tanh d \cos (\beta-\theta)$ (\cite[Theorem 2.2.1]{PBuserBook}). Substituting $d, \theta, \beta$ in above equations, we have
$$
\cosh \gamma_2=\cosh \ell=\cosh ^4 s-2 \cosh ^3 s+2 \cosh s
$$ 
Let the geodesic representatives of $A B A$ and $A M A$ be $\gamma_1$ and $\gamma_3$, respectively. A typical curve homotopic to $A B A$ would be of the form $P Q$ as in Figure \ref{Fig:order4_compatibility}(B) where $A P=A Q=x$ for some $0<x<s$. For $P Q$ to be the geodesic $\gamma_1$ on $S_2$, we must have $\angle A P Q=\pi-\angle A Q P=\phi$ (say), in quadrilateral $\square A B A P C Q$. It can be checked that $\phi=\frac{\pi}{2}$, and hence $\angle A P Q=\angle A Q P=\frac{\pi}{2}$. Likewise, $B C$ is the perpendicular bisector of $P Q$ and using (\cite[Theorem 2.3.1]{PBuserBook}), we observe
$$
\cosh \frac{\gamma_1}{2}=\cosh P C=\cosh \frac{s}{2} \sin \alpha, \text { and } \tanh x=\operatorname{coth} \frac{s}{2} \cos \alpha.
$$
From the symmetry of the polygon, we have $\gamma_3=\gamma_1$.

\textbf{Twist Parameters:} Consider the collection $\left\{c_1, c_2, c_3\right\}$ of seams curves given by $c_1=B T, c_2=A E A, c_3=M N$ where $B, T, M, N$ are the midpoints of their respective sides, and $c_3$ the image of $c_1$ under the reflection by $A D A$. Let $t_i$ be the twist parameters of $\gamma_i$ for $i=1,2,3$. From the symmetry, it immediately follows that $t_2=0$. Denoting $t_1=t$, consider the two common perpendiculars of $\gamma_1(=P Q)$, passing through $A B A \sim A T A$, along the seam curve $c_1$. The perpendiculars $B C$ and $S R$ (see Figure \ref{Fig:order4_compatibility}(B)) both will either be on the left of $A Q$ lying within the polygon, or on the right side of $A Q$ means completely outside the polygon. The latter is ruled out by basic hyperbolic trigonometry. Therefore, for the left case, using \cite[Theorem 2.3.1]{PBuserBook}), we get
$$
\operatorname{coth} Q R=\sinh (s-x) \tan \alpha, \text { and hence }t=+(\frac{\gamma_1}{2}-Q R) .
$$
From the symmetry of $\mathcal{P}_G$ and the choice of the pants and seams curves, it follows that $t_3=-t_1=-t$ completing the proof.
\end{enumerate} 
\end{proof}

\section{Concluding remarks: Actions induced by Type 1 irreducibles on \T spaces }\label{sec:conclusion}
It is worth noting that Algorithm \ref{algo:main} can also be utilised to obtain an explicit description of the Teichm\"uller isometries induced by irreducible Type 1 actions of $S_g$ ($g\geq 2$) via Fenchel-Nielsen coordinates. This is certainly an important aspect of the algorithm, as describing induced actions of irreducible Type 1 mapping classes on the \T space has always been quite hard due to the non-existence of multicurves that are preserved under such actions.
\begin{algo}[A sketch]\label{algo:induced action} Given a Type $1$ irreducible $F \in \Mod(S_g)$ of order $n$, its induced action on $\Teich(S_g)$ is described as follows: 
\begin{enumerate}[\textit{Step} 1:]
\item Given an arbitrary $[\xi] \in \Teich(S_g)$ where $\xi \in \operatorname{HypMet}\left(S_g\right)$, start with a suitable hyperbolic polygon $P$ (not regular in general) in the upper half plane (or equivalently, in the Poincar\'e disc) with the side-pairing as $\mathcal{P}_F$ representing the hyperbolic surface $(S_2,\xi)$. Then $F$ acts as a differemorphism (not an isometry in general) on $P$ of order $n$.  
\item Construct a pants decomposition of $S_g$ following a similar method as in \textit{Step 2} of Algorithm \ref{algo:main} so that the pants curves represent the unique hyperbolic geodesics in their respective homotopy classes. 

\item Viewing $P$ as a hyperbolic polygon in the upper-half plane $\mathbb{H}^2$ or the Poincar\'e disc $\mathbb{D}$ with proper identifications, compute the length and twist parameters associated with the corresponding pants curves and their images, thereby obtaining the Fenchel-Nielsen coordinates of $[\xi]$ and $F_{\#}([\xi])$  given by $P$ using basic hyperbolic trigonometry.
\end{enumerate}
\end{algo}
To avoid computational complexities, we demonstrate Algorithm \ref{algo:induced action} through the irreducible Type 1 action of order $10$ on $S_2$ discussed in Corollary \ref{Cor:order 10} and describe its induced action on $\Teich(S_2)$.
\begin{exmp} \label{exmp:induced action}
Let $F \in \Mod(S_2)$ be the Type 1 irreducible action with $D_F=(10, 0; (1, 2), (2, 5), (1, 10))$. Given an arbitrary $[\xi] \in \Teich(S_2)$, where $\xi \in \operatorname{HypMet}\left(S_2\right)$, it follows that the hyperbolic surface $(S_2,\xi)$ can be represented by a suitable hyperbolic $10$-gon $P$ (not regular in general) with opposite sides identified as shown in Figure \ref{Fig:order10}(A). Let's denote the hyperbolic side-lengths of $P$ by $a_1,\ldots,a_5$ with respective midpoints $M_1, \dots, M_{10}$ arranged in a counter-clockwise order (see Figure \ref{Fig:order10}(A)). We choose the pants decomposition $\mathcal{P}=\{\gamma_1, \gamma_2, \gamma_3\}$ (similar to the construction in Corollary \ref{cor:order10_fixedpt}) with its three pant curves given by $M_1M_2M_7M_6$, $M_3M_4M_9M_8$, and $M_{10}M_5$ respectively. We first show that $\gamma_i$ and $c_i:=F(\gamma_i)$, for $i=1, 2, 3$, are actually the geodesic representatives in their respective homotopy classes. Then we indicate how to explicitly compute their length and twist parameters using hyperbolic trigonometry and the structure of $P$.

\textbf{Length Parameters:}
In the upper-half plane $\mathbb{H}^2$, identifying the edge $a_1$ with a hyperbolic line segment on the unit circle such that $M_1=\iota=(0,1)$,  we consider a one-parameter family $\{\Gamma_l\}$ of curves homotopic to $c_3$, given by the line segments joining $A_l=(\tanh l, \operatorname{sech} l)$ and $B_l=(e^d\tanh l, e^d\operatorname{sech} l)$, where $d=d_{\mathbb{H}^2}(M_1, M_6) \text{ and }l \in [-\frac{a_1}{2}, \frac{a_1}{2}]$ with $|l|=d_{\mathbb{H}^2}(\iota, A_l)$. Using \cite[Theorem 1.2.6]{Katok} and the convention $\Gamma_l:=\text{length}(\Gamma_l)=d_{\mathbb{H}^2}(A_l, B_l)$, we obtain

$$\cosh {\frac{\Gamma_l}{2}} = \frac{|A_l-\overline{B_l}|}{2\{\text{Im}(A_l)\text{Im}(B_l)\}^{\frac{1}{2}}} = \sqrt{\cosh^2 \frac{d}{2} + \sinh^2l\sinh^2\frac{d}{2}}$$
   
It follows that, $\Gamma_l$ attains its minimum value only at $l=0$ and then, we have $A_0=M_1$, $B_0=M_6$. Thus $\Gamma_0=d$ and hence  $c_3=\text{length}[c_3]=\Gamma_0=d$. Using \cite[Theorem 2.2.6]{PBuserBook}, it follows that $\angle C$ ($C$ is the center of $P$) of the hyperbolic quadrilateral $\square CM_5BM_6$ is given by $\cos C = -\cosh \frac{a_1}{2}\cosh \frac{a_5}{2} \cos B + \sinh \frac{a_1}{2}\sinh \frac{a_5}{2}$.

Let $0\leq x\leq a_2, 0\leq y\leq a_3$ are two real numbers, and $P_x, Q_y, R_y, T_x$ are the points on $a_2, a_3, a_8\sim a_3, a_7\sim a_2$ respectively with $x=d_{\mathbb{H}^2}(P_x, A), y= d_{\mathbb{H}^2}(A, Q_y), a_3-y=d_{\mathbb{H}^2}(R_y, B)$, and $a_2-x=d_{\mathbb{H}^2}(B, T_x)$. Now, considering the two-parameter family of curves $\{\Gamma_{x, y}=P_xQ_yR_yT_x\}$ homotopic to the curve $c_1$, and the hyperbolic triangles $\bigtriangleup P_xAQ_y$ and $\bigtriangleup R_yBT_x$ with $P_xQ_y$ and $R_yT_x$ as one of their sides respectively. It turns out (using Mathematica) that $\Gamma_{x, y}:=\text{length}(\Gamma_{x, y})=P_xQ_y+R_yT_x$ attains its minimum at $(x,y)=(\frac{a_2}{2},\frac{a_3}{2})$ and we obtain 
$$c_1=2\operatorname{arcosh}\left(\cosh \frac{a_2}{2} \cosh \frac{a_3}{2} - \sinh \frac{a_2}{2} \sinh \frac{a_3}{2} \cos \alpha\right),$$
where $\alpha$ is the angle between $a_2$ and $a_3$. The computation for $c_2:=\text{length}[c_2]=\text{length}[F(\gamma_2)]$ is similar, and given by $c_2=2\operatorname{arcosh}\left(\cosh \frac{a_4}{2} \cosh \frac{a_5}{2} - \sinh \frac{a_4}{2} \sinh \frac{a_5}{2} \cos \beta\right)$, where $\beta$ is the angle between $a_4$ and $a_5$.
 
 \textbf{Twist Parameters:} To compute the twist parameter $t_i$ of $c_i=F(\gamma_i)$ for $i=1, 2, 3$, we take the collection $\{\gamma_1, \gamma_2, \gamma_3\}$ as our required seams curves. Denoting the common perpendiculars by $P_1P_3$ and $Q_1Q_3$ (see the Figure \ref{Fig:order10}(A)) between $c_1$ and $c_3$ along the seam curve $\gamma_1$, we have $t_1=P_1M_3 + M_8Q_1$.

 Assuming $a_1$ to be a segment of the unit circle as above, we obtain $B=(\tanh {\frac{a_1}{2}}, \operatorname{sech}{\frac{a_1}{2}})$. Likewise, the hyperbolic line segments $a_2, a_3$ and their respective midpoints $M_2, M_3$, can also be determined explicitly. This further leads to determining the common perpendicular between $c_3$ and $\frac{c_1}{2}$ (joining $M_2$ and $M_3$), the corresponding points of intersections $P_1,P_3$, and the mid point $M_3$ of $c_3$, and the hyperbolic length $P_1M_3$. Likewise, the length $M_8Q_1$ can also be determined, summing up the computation for $t_1$. The twist $t_2$ is given by $t_2=-(P_3Q_3)$. The computation for $t_3$ is similar to the method discussed for $t_1$. 
\end{exmp}

\section{Acknowledgements} The second author was supported by the UGC doctoral fellowship, Govt. of India. The third author was supported by an ANRF MATRICS grant, Govt. of India, File No. MTR/2023/000610. 

\bibliographystyle{plain}
\bibliography{branchloci_coods}

@MISC{MATH24,
author={Wolfram Research},
title={Mathematica 14.2},
journal={Wolfram Research Inc.},
address={Champaign, Illinios},
edition={14.2},
year={2024}
}

@misc{Bruno,
  AUTHOR = {Martelli, Bruno},
  TITLE  = {An introduction to geometric topology},
  VOLUME = {2.0},
    YEAR = {2022},
    NOTE = {arxiv version},
    isbn = {},
     URL = {https://arxiv.org/pdf/1610.02592},
}

@article {Harvey1966,
    AUTHOR = {Harvey, W. J.},
     TITLE = {Cyclic groups of automorphisms of a compact {R}iemann surface},
   JOURNAL = {Quart. J. Math. Oxford Ser. (2)},
  FJOURNAL = {The Quarterly Journal of Mathematics. Oxford. Second Series},
    VOLUME = {17},
      YEAR = {1966},
     PAGES = {86--97},
      ISSN = {0033-5606,1464-3847},
   MRCLASS = {30.49},
  MRNUMBER = {201629},
MRREVIEWER = {G.\ G.\ Weill},
       DOI = {10.1093/qmath/17.1.86},
       URL = {https://doi.org/10.1093/qmath/17.1.86},
}

@misc {ThurstonGT3M,
    AUTHOR = {Thurston, William P.},
     TITLE = {The Geometry and Topology of Three-Manifolds},
      YEAR = {2002},
      NOTE = {Electronic version 1.1, MSRI Lecture Notes},
       URL = {https://library.msri.org/books/gt3m/},
}

@book {FM,
    AUTHOR = {Farb, Benson and Margalit, Dan},
     TITLE = {A primer on mapping class groups},
    SERIES = {Princeton Mathematical Series},
    VOLUME = {49},
 PUBLISHER = {Princeton University Press, Princeton, NJ},
      YEAR = {2012},
     PAGES = {xiv+472},
      ISBN = {978-0-691-14794-9},
   MRCLASS = {57M50 (20F36 20F65 57M07 57N05)},
  MRNUMBER = {2850125},
MRREVIEWER = {Stephen P. Humphries},
}

@book {Fenchel-Nielsen,
    AUTHOR = {Fenchel, Werner and Nielsen, Jakob},
     TITLE = {Discontinuous groups of isometries in the hyperbolic plane},
    SERIES = {De Gruyter Studies in Mathematics},
    VOLUME = {29},
      NOTE = {Edited and with a preface by Asmus L. Schmidt,
              Biography of the authors by Bent Fuglede},
 PUBLISHER = {Walter de Gruyter \& Co., Berlin},
      YEAR = {2003},
     PAGES = {xxii+364},
      ISBN = {3-11-017526-6},
   MRCLASS = {30F35 (20H10 30F60 51M10 57M60)},
  MRNUMBER = {1958350},
MRREVIEWER = {Vasily\ A.\ Chernecky},
}

@article{Neilsen1932,
author = {Jakob Nielsen},
title = {{Untersuchungen zur Topologie der geschlossenen zweiseitigen Flächen. III}},
volume = {58},
journal = {Acta Mathematica},
number = {none},
publisher = {Institut Mittag-Leffler},
pages = {87 -- 167},
year = {1932},
doi = {10.1007/BF02547775},
URL = {https://doi.org/10.1007/BF02547775}
}

@article {Nielsen1937,
  AUTHOR    = {Nielsen, Jakob},
  TITLE     = {Die Struktur periodischer Transformationen von Flächen},
  JOURNAL   = {Matematisk-Fysiske Meddelelser Kgl. Danske Videnskabernes Selskab},
  FJOURNAL  = {Matematisk-Fysiske Meddelelser (Kongelige Danske Videnskabernes Selskab)},
  VOLUME    = {15},
  YEAR      = {1937},
  NUMBER    = {1},
  PAGES     = {1--77},
  ISSN      = {},
  MRCLASS   = {},
  MRNUMBER  = {},
  MRREVIEWER= {},
  DOI       = {},
  URL       = {https://gymarkiv.sdu.dk/MFM/kdvs/mfm%2010-19/mfm-15-1.pdf}
}

@article {Gilman,
    AUTHOR = {Gilman, Jane},
     TITLE = {Structures of elliptic irreducible subgroups of the modular
              group},
   JOURNAL = {Proc. London Math. Soc. (3)},
  FJOURNAL = {Proceedings of the London Mathematical Society. Third Series},
    VOLUME = {47},
      YEAR = {1983},
    NUMBER = {1},
     PAGES = {27--42},
      ISSN = {0024-6115,1460-244X},
   MRCLASS = {11F06 (20H05 30F10 32G15 57N05)},
  MRNUMBER = {698926},
MRREVIEWER = {Peter\ J.\ Nicholls},
       DOI = {10.1112/plms/s3-47.1.27},
       URL = {https://doi.org/10.1112/plms/s3-47.1.27},
}

@article {Kerckhoff,
    AUTHOR = {Kerckhoff, Steven P.},
     TITLE = {The {N}ielsen realization problem},
   JOURNAL = {Ann. of Math. (2)},
  FJOURNAL = {Annals of Mathematics. Second Series},
    VOLUME = {117},
      YEAR = {1983},
    NUMBER = {2},
     PAGES = {235--265},
      ISSN = {0003-486X,1939-8980},
   MRCLASS = {32G15 (30F10 57M99 57N10)},
  MRNUMBER = {690845},
MRREVIEWER = {William\ Harvey},
       DOI = {10.2307/2007076},
       URL = {https://doi.org/10.2307/2007076},
}

@book {Anderson,
    AUTHOR = {Anderson, James W.},
     TITLE = {Hyperbolic geometry},
    SERIES = {Springer Undergraduate Mathematics Series},
   EDITION = {Second},
 PUBLISHER = {Springer-Verlag London, Ltd., London},
      YEAR = {2005},
     PAGES = {xii+276},
      ISBN = {1-85233-934-9},
   MRCLASS = {51-01 (30-01 30F35 30F45 51M10)},
  MRNUMBER = {2161463},
}

@book {Katok,
    AUTHOR = {Katok, Svetlana},
     TITLE = {Fuchsian groups},
    SERIES = {Chicago Lectures in Mathematics},
 PUBLISHER = {University of Chicago Press, Chicago, IL},
      YEAR = {1992},
     PAGES = {x+175},
      ISBN = {0-226-42582-7; 0-226-42583-5},
   MRCLASS = {20H10 (30F35)},
  MRNUMBER = {1177168},
MRREVIEWER = {I.\ Kra},
}

@book {PBuserBook,
    AUTHOR = {Buser, Peter},
     TITLE = {Geometry and spectra of compact {R}iemann surfaces},
    SERIES = {Progress in Mathematics},
    VOLUME = {106},
 PUBLISHER = {Birkh\"auser Boston, Inc., Boston, MA},
      YEAR = {1992},
     PAGES = {xiv+454},
      ISBN = {0-8176-3406-1},
   MRCLASS = {58G25 (30F99)},
  MRNUMBER = {1183224},
MRREVIEWER = {Robert\ Brooks},
}

@article {Realization1,
    AUTHOR = {Parsad, Shiv and Rajeevsarathy, Kashyap and Sanki, Bidyut},
     TITLE = {Geometric realizations of cyclic actions on surfaces},
   JOURNAL = {J. Topol. Anal.},
  FJOURNAL = {Journal of Topology and Analysis},
    VOLUME = {11},
      YEAR = {2019},
    NUMBER = {4},
     PAGES = {929--964},
      ISSN = {1793-5253,1793-7167},
   MRCLASS = {57M60 (57M50)},
  MRNUMBER = {4040017},
MRREVIEWER = {Fr\'ed\'eric\ Palesi},
       DOI = {10.1142/s1793525319500365},
       URL = {https://doi.org/10.1142/s1793525319500365},
}

@article {Realization2,
    AUTHOR = {Bhattacharya, Atreyee and Parsad, Shiv and Rajeevsarathy,
              Kashyap},
     TITLE = {Geometric realizations of cyclic actions on surfaces: {II}},
   JOURNAL = {Geom. Dedicata},
  FJOURNAL = {Geometriae Dedicata},
    VOLUME = {216},
      YEAR = {2022},
    NUMBER = {6},
     PAGES = {Paper No. 67, 27},
      ISSN = {0046-5755,1572-9168},
   MRCLASS = {57K20 (57M60)},
  MRNUMBER = {4483891},
MRREVIEWER = {Soumya\ Dey},
       DOI = {10.1007/s10711-022-00729-z},
       URL = {https://doi.org/10.1007/s10711-022-00729-z},
}

@article{AW,
  title={Ueber die hyperelliptischen Curven und diejenigen vom Geschlechte p= 3, welche eindeutigen Transformationen in sich zulassen” and “Ueber die algebraischen Curven von den Geschlechtern p= 4, 5 und 6 welche eindeutigen Transformationen in sich besitzen”},
  author={Wiman, A.},
  journal={Svenska Vetenskaps-Akademiens Hadlingar, Stockholm},
  volume={96},
  year={1895}
}
\end{document}